\title{A Finite-Volume Scheme for\protect\\Fractional Diffusion on Bounded Domains}
\newcommand{\titlePDF}{A Finite-Volume Scheme for Fractional Diffusion on Bounded Domains}
\newcommand{\authorPDF}{Bailo, Carrillo, Fronzoni, Gomez-Castro.}
\newcommand{\subjectPDF}{35R11;	65N08.}
\newcommand{\keywordsPDF}{Fractional Laplacian; Levy-Fokker-Planck equation; finite-volume schemes.}
\author[1]{Rafael Bailo}
\author[1]{Jos\'{e} A. Carrillo}
\author[1]{\authorcr Stefano Fronzoni}
\author[2,1]{David G\'{o}mez-Castro}
\affil[1]{
	Mathematical Institute, University of Oxford
}
\affil[ ]{
	OX2 6GG Oxford, United Kingdom
}
\affil[ ]{\textit{bailo@maths.ox.ac.uk, carrillo@maths.ox.ac.uk, fronzoni@maths.ox.ac.uk}}
\affil[ ]{}
\affil[2]{
	Departamento de Matemáticas, Universidad Autónoma de Madrid
}
\affil[ ]{
	Ciudad Universitaria de Cantoblanco, 28049 Madrid, Spain
}
\affil[ ]{\textit{david.gomezcastro@uam.es}}
\let\newtitle\@title
\let\newauthor\@author
\let\newdate\@date
\definecolor{ppGreen}{HTML}{008000}
\definecolor{ppBlue}{HTML}{0000FF}
\definecolor{ppRed}{HTML}{FF0000}
\definecolor{ppPurple}{HTML}{800080}
\definecolor{lightblue}{rgb}{0.145,0.6666,1}
\definecolor{grey52}{RGB}{52,52,52}
\definecolor{color1}{RGB}{0,62,116}
\definecolor{color2}{RGB}{152,152,152}
\definecolor{color3}{RGB}{52,52,52}
\definecolor{color4}{RGB}{100,100,100}
\definecolor{imperialnavy}{RGB}{0,33,71}
\definecolor{imperialblue}{RGB}{0,62,116}
\definecolor{imperialgrey}{RGB}{235,238,238}
\definecolor{imperialcoolgrey}{RGB}{157,157,157}
\definecolor{lille}{RGB}{178, 35, 114} 
\newcounter{review}
\newcommand{\ntcreview}[3]{
	\refstepcounter{review}
	{\color{#2}{\textbf{[#1]}: #3}}
}
\newcommand{\creview}[3]{
	\ntcreview{#1}{#2}{#3}
	\addcontentsline{tor}{subsection}{
		\thereview~\textbf{[#1]}:~#3
	}
}
\newcommand{\review}[2]{\creview{#1}{blue}{#2}}
\newcommand\listreviewname{List of Reviews}
\newcommand\listofreviews{
	\section*{\listreviewname}\@starttoc{tor}}
\newcommand{\subjectclassification}[1]{

	{\small\textbf{\textit{AMS Subject Classification --- }} #1}

}
\newcommand{\keywords}[1]{

	{\small\textbf{\textit{Keywords --- }} #1}

}
\renewcommand\lll\MoveEqLeft
\pgfplotsset{compat=1.16}
\newcolumntype{L}[1]{>{\raggedright\let\newline\\\arraybackslash\hspace{0pt}}m{#1}}
\newcolumntype{C}[1]{>{\centering\let\newline\\\arraybackslash\hspace{0pt}}m{#1}}
\newcolumntype{R}[1]{>{\raggedleft\let\newline\\\arraybackslash\hspace{0pt}}m{#1}}
\newcommand\term\emph
\numberwithin{equation}{section}
\def\@maketitle{
	\newpage
	\begin{center}
		\let \footnote \thanks
		{\LARGE\bfseries \@title \par}
		\vskip 2.5em
			{\large
				\lineskip .5em
				\begin{tabular}[t]{c}
					\@author
				\end{tabular}\par}
		\vskip 1em
			{\large \@date}
	\end{center}
	\par
	\vskip 1.5em}
\theoremstyle{plain}
\newtheorem{theorem}{Theorem}[section]
\theoremstyle{remark}
\newtheorem{remark}[theorem]{\bf Remark}
\newcommand{\defeq}{\coloneqq}
\def\XXint#1#2#3{{\setbox0=\hbox{$#1{#2#3}{\int}$ }
			\vcenter{\hbox{$#2#3$ }}\kern-.6\wd0}}
\DeclarePairedDelimiter{\prt}{(}{)}
\DeclarePairedDelimiter{\brk}{[}{]}
\DeclarePairedDelimiter{\abs}{|}{|}
\DeclarePairedDelimiter{\norm}{\|}{\|}
\DeclarePairedDelimiter{\set}{\{}{\}}
\DeclarePairedDelimiter{\inn}{\langle}{\rangle}
\newcommand{\inner}[2]{\inn{#1,#2}}
\newcommand\inner*[2]{\inn*{#1,#2}}
\DeclarePairedDelimiter{\positive}{(}{)^{+}}
\DeclarePairedDelimiter{\negative}{(}{)^{-}}
\newcommand\pos\positive
\renewcommand\neg\negative
\newcommand\pos*{\positive*}
\newcommand\neg*{\negative*}
\newcommand{\R}{{\mathbb{R}}}
\newcommand{\Rd}{{\mathbb{R}^d}}
\renewcommand{\L}[1]{{L^{#1}}}
\newcommand{\Lone}{\L{1}}
\newcommand{\Ltwo}{\L{2}}
\newcommand{\pnorm}[2]{\norm{#2}_{\L{#1}}}
\newcommand\pnorm*[2]{\norm*{#2}_{\L{#1}}}
\newcommand{\psnorm}[3]{\norm{#3}_{\L{#1}(#2)}}
\newcommand\psnorm*[3]{\norm*{#3}_{\L{#1}(#2)}}
\newcommand{\pnormp}[2]{\pnorm{#1}{#2}^{#1}}
\newcommand\pnormp*[2]{\pnorm*{#1}{#2}^{#1}}
\newcommand{\psnormp}[3]{\psnorm{#1}{#2}{#3}^{#1}}
\newcommand\psnormp*[3]{\psnorm*{#1}{#2}{#3}^{#1}}
\newcommand\svec\vec
\renewcommand{\vec}{\mathbf}
\renewcommand{\svec}{\boldsymbol}
\renewcommand{\d}{\mathrm{d}}
\newcommand{\dd}{\mathop{}\!\d}
\newcommand{\der}[2]{\frac{\d #1}{\d #2}}
\newcommand{\pder}[2]{\frac{\partial #1}{\partial #2}}
\newcommand{\dt}{\dd t}
\newcommand{\dx}{\dd x}
\newcommand{\dy}{\dd y}
\newcommand{\du}{\dd u}
\newcommand{\dv}{\dd v}
\newcommand{\grad}{\nabla}
\renewcommand{\div}{\nabla\cdot}
\newcommand{\laplacian}{\Delta}
\newcommand{\laplace}{\laplacian}
\newcommand{\pt}{\partial_t}
\newcommand{\Dt}{\Delta t}
\newcommand{\Dx}{\Delta x}
\newcommand{\Dy}{\Delta y}
\newcommand{\nhalf}{\frac{1}{2}}
\renewcommand{\i}{_{i}}
\newcommand{\ip}{_{i+1}}
\newcommand{\im}{_{i-1}}
\newcommand{\ih}{_{i+\nhalf}}
\newcommand{\imh}{_{i-\nhalf}}
\renewcommand{\j}{_{j}}
\newcommand{\jp}{_{j+1}}
\newcommand{\jh}{_{j+\nhalf}}
\newcommand{\jmh}{_{j-\nhalf}}
\renewcommand{\k}{_{k}}
\newcommand{\kh}{_{k+\nhalf}}
\newcommand{\kmh}{_{k-\nhalf}}
\renewcommand{\ij}{_{i,\,j}}
\newcommand{\ipj}{_{i+1,\,j}}
\newcommand{\ihj}{_{i+\nhalf,\,j}}
\newcommand{\imhj}{_{i-\nhalf,\,j}}
\newcommand{\ijp}{_{i,\,j+1}}
\newcommand{\ijh}{_{i,\,j+\nhalf}}
\newcommand{\ijmh}{_{i,\,j-\nhalf}}
\newcommand{\kl}{_{k,\,l}}
\newcommand{\m}{^{m}}
\renewcommand{\mp}{^{m+1}}
\newcommand{\mh}{^{m+\nhalf}}
\newcommand{\ppr}{(r)}
\newcommand{\Wr}{^{W,\,\ppr}}
\newlength{\dhatheight}
\newcommand{\E}{^{E}}
\newcommand{\W}{^{W}}
	\newcommand{\Wr}{^{W,\,\ppr}}
	\renewcommand{\Wr}{^{W,\,\ppr}}
\DeclareMathOperator*{\minmod}{minmod}
\newcommand{\curlyC}{\mathcal{C}}
\newcommand{\ik}{_{i,\,k}}
\newcommand{\bbrho}{\bm{\bar{\rho}}}
\newif\ifskiptable
\pgfplotsset{
	colormap={hsv}{
			hsb(0.00cm)=(0.00,0,0.95);
			hsb(0.05cm)=(0.05,1,1);
			hsb(0.10cm)=(0.10,1,1);
			hsb(0.15cm)=(0.15,1,1);
			hsb(0.20cm)=(0.20,1,1);
			hsb(0.25cm)=(0.25,1,1);
			hsb(0.30cm)=(0.30,1,1);
			hsb(0.35cm)=(0.35,1,1);
			hsb(0.40cm)=(0.40,1,1);
			hsb(0.45cm)=(0.45,1,1);
			hsb(0.50cm)=(0.50,1,1);
			hsb(0.55cm)=(0.55,1,1);
			hsb(0.60cm)=(0.60,1,1);
			hsb(0.65cm)=(0.65,1,1);
			hsb(0.70cm)=(0.70,1,1);
			hsb(0.75cm)=(0.75,1,1);
			hsb(0.80cm)=(0.80,1,1);
			hsb(0.85cm)=(0.85,1,1);
			hsb(0.90cm)=(0.90,1,1);
			hsb(0.95cm)=(0.95,1,1);
			hsb(1.00cm)=(1.00,1,1);
		}
}
\pgfplotsset{
	colormap={hsvSoft}{
			hsb(0.00cm)=(0.00,0,0.95);
			hsb(0.05cm)=(0.05,1,1);
			hsb(0.10cm)=(0.10,1,1);
			hsb(0.15cm)=(0.15,1,1);
			hsb(0.20cm)=(0.20,1,1);
			hsb(0.25cm)=(0.25,1,1);
			hsb(0.30cm)=(0.30,1,1);
			hsb(0.35cm)=(0.35,1,1);
			hsb(0.40cm)=(0.40,1,1);
			hsb(0.45cm)=(0.45,1,1);
			hsb(0.50cm)=(0.50,1,1);
			hsb(0.55cm)=(0.55,1,1);
			hsb(0.60cm)=(0.60,1,1);
			hsb(0.65cm)=(0.65,1,1);
			hsb(0.70cm)=(0.70,1,1);
			hsb(0.75cm)=(0.75,1,1);
			hsb(0.80cm)=(0.80,1,1);
			hsb(0.85cm)=(0.85,1,1);
			hsb(0.90cm)=(0.90,1,1);
			hsb(0.95cm)=(0.95,1,1);
			hsb(1.00cm)=(0.00,0,0.95);
		}
}
\pgfplotsset{
	colormap={viridisFull}{
			rgb=(0.26700401, 0.00487433, 0.32941519)
			rgb=(0.26851048, 0.00960483, 0.33542652)
			rgb=(0.26994384, 0.01462494, 0.34137895)
			rgb=(0.27130489, 0.01994186, 0.34726862)
			rgb=(0.27259384, 0.02556309, 0.35309303)
			rgb=(0.27380934, 0.03149748, 0.35885256)
			rgb=(0.27495242, 0.03775181, 0.36454323)
			rgb=(0.27602238, 0.04416723, 0.37016418)
			rgb=(0.2770184 , 0.05034437, 0.37571452)
			rgb=(0.27794143, 0.05632444, 0.38119074)
			rgb=(0.27879067, 0.06214536, 0.38659204)
			rgb=(0.2795655 , 0.06783587, 0.39191723)
			rgb=(0.28026658, 0.07341724, 0.39716349)
			rgb=(0.28089358, 0.07890703, 0.40232944)
			rgb=(0.28144581, 0.0843197 , 0.40741404)
			rgb=(0.28192358, 0.08966622, 0.41241521)
			rgb=(0.28232739, 0.09495545, 0.41733086)
			rgb=(0.28265633, 0.10019576, 0.42216032)
			rgb=(0.28291049, 0.10539345, 0.42690202)
			rgb=(0.28309095, 0.11055307, 0.43155375)
			rgb=(0.28319704, 0.11567966, 0.43611482)
			rgb=(0.28322882, 0.12077701, 0.44058404)
			rgb=(0.28318684, 0.12584799, 0.44496 )
			rgb=(0.283072 , 0.13089477, 0.44924127)
			rgb=(0.28288389, 0.13592005, 0.45342734)
			rgb=(0.28262297, 0.14092556, 0.45751726)
			rgb=(0.28229037, 0.14591233, 0.46150995)
			rgb=(0.28188676, 0.15088147, 0.46540474)
			rgb=(0.28141228, 0.15583425, 0.46920128)
			rgb=(0.28086773, 0.16077132, 0.47289909)
			rgb=(0.28025468, 0.16569272, 0.47649762)
			rgb=(0.27957399, 0.17059884, 0.47999675)
			rgb=(0.27882618, 0.1754902 , 0.48339654)
			rgb=(0.27801236, 0.18036684, 0.48669702)
			rgb=(0.27713437, 0.18522836, 0.48989831)
			rgb=(0.27619376, 0.19007447, 0.49300074)
			rgb=(0.27519116, 0.1949054 , 0.49600488)
			rgb=(0.27412802, 0.19972086, 0.49891131)
			rgb=(0.27300596, 0.20452049, 0.50172076)
			rgb=(0.27182812, 0.20930306, 0.50443413)
			rgb=(0.27059473, 0.21406899, 0.50705243)
			rgb=(0.26930756, 0.21881782, 0.50957678)
			rgb=(0.26796846, 0.22354911, 0.5120084 )
			rgb=(0.26657984, 0.2282621 , 0.5143487 )
			rgb=(0.2651445 , 0.23295593, 0.5165993 )
			rgb=(0.2636632 , 0.23763078, 0.51876163)
			rgb=(0.26213801, 0.24228619, 0.52083736)
			rgb=(0.26057103, 0.2469217 , 0.52282822)
			rgb=(0.25896451, 0.25153685, 0.52473609)
			rgb=(0.25732244, 0.2561304 , 0.52656332)
			rgb=(0.25564519, 0.26070284, 0.52831152)
			rgb=(0.25393498, 0.26525384, 0.52998273)
			rgb=(0.25219404, 0.26978306, 0.53157905)
			rgb=(0.25042462, 0.27429024, 0.53310261)
			rgb=(0.24862899, 0.27877509, 0.53455561)
			rgb=(0.2468114 , 0.28323662, 0.53594093)
			rgb=(0.24497208, 0.28767547, 0.53726018)
			rgb=(0.24311324, 0.29209154, 0.53851561)
			rgb=(0.24123708, 0.29648471, 0.53970946)
			rgb=(0.23934575, 0.30085494, 0.54084398)
			rgb=(0.23744138, 0.30520222, 0.5419214 )
			rgb=(0.23552606, 0.30952657, 0.54294396)
			rgb=(0.23360277, 0.31382773, 0.54391424)
			rgb=(0.2316735 , 0.3181058 , 0.54483444)
			rgb=(0.22973926, 0.32236127, 0.54570633)
			rgb=(0.22780192, 0.32659432, 0.546532 )
			rgb=(0.2258633 , 0.33080515, 0.54731353)
			rgb=(0.22392515, 0.334994 , 0.54805291)
			rgb=(0.22198915, 0.33916114, 0.54875211)
			rgb=(0.22005691, 0.34330688, 0.54941304)
			rgb=(0.21812995, 0.34743154, 0.55003755)
			rgb=(0.21620971, 0.35153548, 0.55062743)
			rgb=(0.21429757, 0.35561907, 0.5511844 )
			rgb=(0.21239477, 0.35968273, 0.55171011)
			rgb=(0.2105031 , 0.36372671, 0.55220646)
			rgb=(0.20862342, 0.36775151, 0.55267486)
			rgb=(0.20675628, 0.37175775, 0.55311653)
			rgb=(0.20490257, 0.37574589, 0.55353282)
			rgb=(0.20306309, 0.37971644, 0.55392505)
			rgb=(0.20123854, 0.38366989, 0.55429441)
			rgb=(0.1994295 , 0.38760678, 0.55464205)
			rgb=(0.1976365 , 0.39152762, 0.55496905)
			rgb=(0.19585993, 0.39543297, 0.55527637)
			rgb=(0.19410009, 0.39932336, 0.55556494)
			rgb=(0.19235719, 0.40319934, 0.55583559)
			rgb=(0.19063135, 0.40706148, 0.55608907)
			rgb=(0.18892259, 0.41091033, 0.55632606)
			rgb=(0.18723083, 0.41474645, 0.55654717)
			rgb=(0.18555593, 0.4185704 , 0.55675292)
			rgb=(0.18389763, 0.42238275, 0.55694377)
			rgb=(0.18225561, 0.42618405, 0.5571201 )
			rgb=(0.18062949, 0.42997486, 0.55728221)
			rgb=(0.17901879, 0.43375572, 0.55743035)
			rgb=(0.17742298, 0.4375272 , 0.55756466)
			rgb=(0.17584148, 0.44128981, 0.55768526)
			rgb=(0.17427363, 0.4450441 , 0.55779216)
			rgb=(0.17271876, 0.4487906 , 0.55788532)
			rgb=(0.17117615, 0.4525298 , 0.55796464)
			rgb=(0.16964573, 0.45626209, 0.55803034)
			rgb=(0.16812641, 0.45998802, 0.55808199)
			rgb=(0.1666171 , 0.46370813, 0.55811913)
			rgb=(0.16511703, 0.4674229 , 0.55814141)
			rgb=(0.16362543, 0.47113278, 0.55814842)
			rgb=(0.16214155, 0.47483821, 0.55813967)
			rgb=(0.16066467, 0.47853961, 0.55811466)
			rgb=(0.15919413, 0.4822374 , 0.5580728 )
			rgb=(0.15772933, 0.48593197, 0.55801347)
			rgb=(0.15626973, 0.4896237 , 0.557936 )
			rgb=(0.15481488, 0.49331293, 0.55783967)
			rgb=(0.15336445, 0.49700003, 0.55772371)
			rgb=(0.1519182 , 0.50068529, 0.55758733)
			rgb=(0.15047605, 0.50436904, 0.55742968)
			rgb=(0.14903918, 0.50805136, 0.5572505 )
			rgb=(0.14760731, 0.51173263, 0.55704861)
			rgb=(0.14618026, 0.51541316, 0.55682271)
			rgb=(0.14475863, 0.51909319, 0.55657181)
			rgb=(0.14334327, 0.52277292, 0.55629491)
			rgb=(0.14193527, 0.52645254, 0.55599097)
			rgb=(0.14053599, 0.53013219, 0.55565893)
			rgb=(0.13914708, 0.53381201, 0.55529773)
			rgb=(0.13777048, 0.53749213, 0.55490625)
			rgb=(0.1364085 , 0.54117264, 0.55448339)
			rgb=(0.13506561, 0.54485335, 0.55402906)
			rgb=(0.13374299, 0.54853458, 0.55354108)
			rgb=(0.13244401, 0.55221637, 0.55301828)
			rgb=(0.13117249, 0.55589872, 0.55245948)
			rgb=(0.1299327 , 0.55958162, 0.55186354)
			rgb=(0.12872938, 0.56326503, 0.55122927)
			rgb=(0.12756771, 0.56694891, 0.55055551)
			rgb=(0.12645338, 0.57063316, 0.5498411 )
			rgb=(0.12539383, 0.57431754, 0.54908564)
			rgb=(0.12439474, 0.57800205, 0.5482874 )
			rgb=(0.12346281, 0.58168661, 0.54744498)
			rgb=(0.12260562, 0.58537105, 0.54655722)
			rgb=(0.12183122, 0.58905521, 0.54562298)
			rgb=(0.12114807, 0.59273889, 0.54464114)
			rgb=(0.12056501, 0.59642187, 0.54361058)
			rgb=(0.12009154, 0.60010387, 0.54253043)
			rgb=(0.11973756, 0.60378459, 0.54139999)
			rgb=(0.11951163, 0.60746388, 0.54021751)
			rgb=(0.11942341, 0.61114146, 0.53898192)
			rgb=(0.11948255, 0.61481702, 0.53769219)
			rgb=(0.11969858, 0.61849025, 0.53634733)
			rgb=(0.12008079, 0.62216081, 0.53494633)
			rgb=(0.12063824, 0.62582833, 0.53348834)
			rgb=(0.12137972, 0.62949242, 0.53197275)
			rgb=(0.12231244, 0.63315277, 0.53039808)
			rgb=(0.12344358, 0.63680899, 0.52876343)
			rgb=(0.12477953, 0.64046069, 0.52706792)
			rgb=(0.12632581, 0.64410744, 0.52531069)
			rgb=(0.12808703, 0.64774881, 0.52349092)
			rgb=(0.13006688, 0.65138436, 0.52160791)
			rgb=(0.13226797, 0.65501363, 0.51966086)
			rgb=(0.13469183, 0.65863619, 0.5176488 )
			rgb=(0.13733921, 0.66225157, 0.51557101)
			rgb=(0.14020991, 0.66585927, 0.5134268 )
			rgb=(0.14330291, 0.66945881, 0.51121549)
			rgb=(0.1466164 , 0.67304968, 0.50893644)
			rgb=(0.15014782, 0.67663139, 0.5065889 )
			rgb=(0.15389405, 0.68020343, 0.50417217)
			rgb=(0.15785146, 0.68376525, 0.50168574)
			rgb=(0.16201598, 0.68731632, 0.49912906)
			rgb=(0.1663832 , 0.69085611, 0.49650163)
			rgb=(0.1709484 , 0.69438405, 0.49380294)
			rgb=(0.17570671, 0.6978996 , 0.49103252)
			rgb=(0.18065314, 0.70140222, 0.48818938)
			rgb=(0.18578266, 0.70489133, 0.48527326)
			rgb=(0.19109018, 0.70836635, 0.48228395)
			rgb=(0.19657063, 0.71182668, 0.47922108)
			rgb=(0.20221902, 0.71527175, 0.47608431)
			rgb=(0.20803045, 0.71870095, 0.4728733 )
			rgb=(0.21400015, 0.72211371, 0.46958774)
			rgb=(0.22012381, 0.72550945, 0.46622638)
			rgb=(0.2263969 , 0.72888753, 0.46278934)
			rgb=(0.23281498, 0.73224735, 0.45927675)
			rgb=(0.2393739 , 0.73558828, 0.45568838)
			rgb=(0.24606968, 0.73890972, 0.45202405)
			rgb=(0.25289851, 0.74221104, 0.44828355)
			rgb=(0.25985676, 0.74549162, 0.44446673)
			rgb=(0.26694127, 0.74875084, 0.44057284)
			rgb=(0.27414922, 0.75198807, 0.4366009 )
			rgb=(0.28147681, 0.75520266, 0.43255207)
			rgb=(0.28892102, 0.75839399, 0.42842626)
			rgb=(0.29647899, 0.76156142, 0.42422341)
			rgb=(0.30414796, 0.76470433, 0.41994346)
			rgb=(0.31192534, 0.76782207, 0.41558638)
			rgb=(0.3198086 , 0.77091403, 0.41115215)
			rgb=(0.3277958 , 0.77397953, 0.40664011)
			rgb=(0.33588539, 0.7770179 , 0.40204917)
			rgb=(0.34407411, 0.78002855, 0.39738103)
			rgb=(0.35235985, 0.78301086, 0.39263579)
			rgb=(0.36074053, 0.78596419, 0.38781353)
			rgb=(0.3692142 , 0.78888793, 0.38291438)
			rgb=(0.37777892, 0.79178146, 0.3779385 )
			rgb=(0.38643282, 0.79464415, 0.37288606)
			rgb=(0.39517408, 0.79747541, 0.36775726)
			rgb=(0.40400101, 0.80027461, 0.36255223)
			rgb=(0.4129135 , 0.80304099, 0.35726893)
			rgb=(0.42190813, 0.80577412, 0.35191009)
			rgb=(0.43098317, 0.80847343, 0.34647607)
			rgb=(0.44013691, 0.81113836, 0.3409673 )
			rgb=(0.44936763, 0.81376835, 0.33538426)
			rgb=(0.45867362, 0.81636288, 0.32972749)
			rgb=(0.46805314, 0.81892143, 0.32399761)
			rgb=(0.47750446, 0.82144351, 0.31819529)
			rgb=(0.4870258 , 0.82392862, 0.31232133)
			rgb=(0.49661536, 0.82637633, 0.30637661)
			rgb=(0.5062713 , 0.82878621, 0.30036211)
			rgb=(0.51599182, 0.83115784, 0.29427888)
			rgb=(0.52577622, 0.83349064, 0.2881265 )
			rgb=(0.5356211 , 0.83578452, 0.28190832)
			rgb=(0.5455244 , 0.83803918, 0.27562602)
			rgb=(0.55548397, 0.84025437, 0.26928147)
			rgb=(0.5654976 , 0.8424299 , 0.26287683)
			rgb=(0.57556297, 0.84456561, 0.25641457)
			rgb=(0.58567772, 0.84666139, 0.24989748)
			rgb=(0.59583934, 0.84871722, 0.24332878)
			rgb=(0.60604528, 0.8507331 , 0.23671214)
			rgb=(0.61629283, 0.85270912, 0.23005179)
			rgb=(0.62657923, 0.85464543, 0.22335258)
			rgb=(0.63690157, 0.85654226, 0.21662012)
			rgb=(0.64725685, 0.85839991, 0.20986086)
			rgb=(0.65764197, 0.86021878, 0.20308229)
			rgb=(0.66805369, 0.86199932, 0.19629307)
			rgb=(0.67848868, 0.86374211, 0.18950326)
			rgb=(0.68894351, 0.86544779, 0.18272455)
			rgb=(0.69941463, 0.86711711, 0.17597055)
			rgb=(0.70989842, 0.86875092, 0.16925712)
			rgb=(0.72039115, 0.87035015, 0.16260273)
			rgb=(0.73088902, 0.87191584, 0.15602894)
			rgb=(0.74138803, 0.87344918, 0.14956101)
			rgb=(0.75188414, 0.87495143, 0.14322828)
			rgb=(0.76237342, 0.87642392, 0.13706449)
			rgb=(0.77285183, 0.87786808, 0.13110864)
			rgb=(0.78331535, 0.87928545, 0.12540538)
			rgb=(0.79375994, 0.88067763, 0.12000532)
			rgb=(0.80418159, 0.88204632, 0.11496505)
			rgb=(0.81457634, 0.88339329, 0.11034678)
			rgb=(0.82494028, 0.88472036, 0.10621724)
			rgb=(0.83526959, 0.88602943, 0.1026459 )
			rgb=(0.84556056, 0.88732243, 0.09970219)
			rgb=(0.8558096 , 0.88860134, 0.09745186)
			rgb=(0.86601325, 0.88986815, 0.09595277)
			rgb=(0.87616824, 0.89112487, 0.09525046)
			rgb=(0.88627146, 0.89237353, 0.09537439)
			rgb=(0.89632002, 0.89361614, 0.09633538)
			rgb=(0.90631121, 0.89485467, 0.09812496)
			rgb=(0.91624212, 0.89609127, 0.1007168 )
			rgb=(0.92610579, 0.89732977, 0.10407067)
			rgb=(0.93590444, 0.8985704 , 0.10813094)
			rgb=(0.94563626, 0.899815 , 0.11283773)
			rgb=(0.95529972, 0.90106534, 0.11812832)
			rgb=(0.96489353, 0.90232311, 0.12394051)
			rgb=(0.97441665, 0.90358991, 0.13021494)
			rgb=(0.98386829, 0.90486726, 0.13689671)
			rgb=(0.99324789, 0.90615657, 0.1439362 )
		}
}
\pgfplotsset{
	colormap={viridisSoft}{
			rgb255=(242, 242, 242);
			rgb=(0.28026,0.1657,0.4765);
			rgb=(0.26366,0.23763,0.51877);
			rgb=(0.23744,0.3052,0.54192);
			rgb=(0.20862,0.36775,0.55267);
			rgb=(0.18225,0.42618,0.55711);
			rgb=(0.1592,0.48224,0.55807);
			rgb=(0.13777,0.53749,0.5549);
			rgb=(0.12115,0.59274,0.54465);
			rgb=(0.12808,0.64775,0.5235);
			rgb=(0.18065,0.7014,0.48819);
			rgb=(0.27415,0.75198,0.4366);
			rgb=(0.39517,0.79747,0.36775);
			rgb=(0.53561,0.83578,0.2819);
			rgb=(0.68895,0.86545,0.18272);
			rgb=(0.84557,0.88733,0.0997);
			rgb=(0.99324,0.90616,0.14394)
		}
}
\pgfplotsset{
	colormap={cellRed}{
			rgb255=(242.0,242.0,242.0);
			rgb255=(241.63157894736844,234.47368421052633,234.47368421052633);
			rgb255=(241.26315789473685,226.94736842105266,226.94736842105266);
			rgb255=(240.89473684210526,219.42105263157893,219.42105263157893);
			rgb255=(240.5263157894737,211.89473684210526,211.89473684210526);
			rgb255=(240.1578947368421,204.3684210526316,204.3684210526316);
			rgb255=(239.78947368421052,196.84210526315792,196.84210526315792);
			rgb255=(239.42105263157896,189.31578947368422,189.31578947368422);
			rgb255=(239.05263157894737,181.78947368421052,181.78947368421052);
			rgb255=(238.6842105263158,174.26315789473688,174.26315789473688);
			rgb255=(238.31578947368422,166.73684210526315,166.73684210526315);
			rgb255=(237.94736842105263,159.21052631578948,159.21052631578948);
			rgb255=(237.57894736842104,151.68421052631578,151.68421052631578);
			rgb255=(237.21052631578948,144.1578947368421,144.1578947368421);
			rgb255=(236.84210526315792,136.63157894736844,136.63157894736844);
			rgb255=(236.47368421052633,129.10526315789474,129.10526315789474);
			rgb255=(236.10526315789474,121.57894736842107,121.57894736842107);
			rgb255=(235.73684210526318,114.05263157894737,114.05263157894737);
			rgb255=(235.3684210526316,106.52631578947368,106.52631578947368);
			rgb255=(235.0,99.0,99.0);
		}
}
\pgfplotsset{
	colormap={cellGreen}{
			rgb255=(242.0,242.0,242.0);
			rgb255=(236.21052631578948,239.5263157894737,234.26315789473685);
			rgb255=(230.42105263157896,237.05263157894737,226.5263157894737);
			rgb255=(224.6315789473684,234.57894736842104,218.78947368421052);
			rgb255=(218.8421052631579,232.10526315789474,211.05263157894737);
			rgb255=(213.05263157894737,229.63157894736844,203.31578947368422);
			rgb255=(207.26315789473685,227.1578947368421,195.57894736842107);
			rgb255=(201.4736842105263,224.68421052631578,187.8421052631579);
			rgb255=(195.68421052631578,222.21052631578948,180.10526315789474);
			rgb255=(189.8947368421053,219.73684210526318,172.36842105263162);
			rgb255=(184.10526315789474,217.26315789473682,164.63157894736844);
			rgb255=(178.31578947368422,214.78947368421052,156.89473684210526);
			rgb255=(172.5263157894737,212.31578947368422,149.1578947368421);
			rgb255=(166.73684210526318,209.84210526315792,141.42105263157896);
			rgb255=(160.94736842105263,207.3684210526316,133.6842105263158);
			rgb255=(155.1578947368421,204.89473684210526,125.94736842105263);
			rgb255=(149.3684210526316,202.42105263157893,118.21052631578948);
			rgb255=(143.57894736842104,199.94736842105266,110.47368421052632);
			rgb255=(137.78947368421052,197.47368421052633,102.73684210526316);
			rgb255=(132.0,195.0,95.0);
		}
}
\pgfplotsset{
	colormap={cellRedSquared}{
			rgb255=(242.0,242.0,242.0);
			rgb255=(241.28254847645428,227.34349030470915,227.34349030470915);
			rgb255=(240.60387811634348,213.47922437673128,213.47922437673128);
			rgb255=(239.9639889196676,200.40720221606648,200.40720221606648);
			rgb255=(239.36288088642658,188.1274238227147,188.1274238227147);
			rgb255=(238.8005540166205,176.63988919667594,176.63988919667594);
			rgb255=(238.2770083102493,165.94459833795014,165.94459833795014);
			rgb255=(237.79224376731304,156.04155124653738,156.04155124653738);
			rgb255=(237.34626038781164,146.93074792243766,146.93074792243766);
			rgb255=(236.93905817174516,138.61218836565098,138.61218836565098);
			rgb255=(236.57063711911357,131.0858725761773,131.0858725761773);
			rgb255=(236.2409972299169,124.35180055401662,124.35180055401662);
			rgb255=(235.95013850415512,118.40997229916897,118.40997229916897);
			rgb255=(235.69806094182823,113.26038781163435,113.26038781163435);
			rgb255=(235.4847645429363,108.90304709141274,108.90304709141274);
			rgb255=(235.3102493074792,105.33795013850416,105.33795013850416);
			rgb255=(235.17451523545705,102.56509695290858,102.56509695290858);
			rgb255=(235.0775623268698,100.58448753462605,100.58448753462605);
			rgb255=(235.01939058171746,99.3961218836565,99.3961218836565);
			rgb255=(235.0,99.0,99.0);
		}
}
\pgfplotsset{
	colormap={cellGreenSquared}{
			rgb255=(242.0,242.0,242.0);
			rgb255=(230.7257617728532,237.18282548476455,226.93351800554018);
			rgb255=(220.06094182825484,232.62603878116343,212.6814404432133);
			rgb255=(210.00554016620498,228.32963988919667,199.2437673130194);
			rgb255=(200.5595567867036,224.29362880886427,186.62049861495845);
			rgb255=(191.7229916897507,220.5180055401662,174.8116343490305);
			rgb255=(183.49584487534625,217.0027700831025,163.81717451523545);
			rgb255=(175.87811634349032,213.74792243767314,153.63711911357342);
			rgb255=(168.86980609418282,210.75346260387812,144.27146814404432);
			rgb255=(162.47091412742384,208.01939058171746,135.72022160664818);
			rgb255=(156.68144044321332,205.54570637119116,127.98337950138506);
			rgb255=(151.50138504155126,203.33240997229916,121.06094182825484);
			rgb255=(146.9307479224377,201.37950138504155,114.95290858725764);
			rgb255=(142.96952908587255,199.68698060941827,109.65927977839334);
			rgb255=(139.61772853185596,198.25484764542935,105.18005540166205);
			rgb255=(136.8753462603878,197.0831024930748,101.51523545706371);
			rgb255=(134.74238227146813,196.17174515235456,98.66481994459834);
			rgb255=(133.21883656509695,195.5207756232687,96.62880886426592);
			rgb255=(132.30470914127426,195.13019390581718,95.40720221606648);
			rgb255=(132.0,195.0,95.0);
		}
} 
\pgfplotsset{every axis/.append style={
			grid=both,
			grid style={white, line width=.1pt},
			major grid style={white, line width=1.5pt},
			axis background/.style={fill=gray!10},
			axis line style={draw=none},
			tick style={draw=none},
			xlabel = $x$,
			line width=1pt,
			legend style={
					line width = 1pt,
					draw=none,
					/tikz/every even column/.append style={column sep=0.5cm}
				},
		}}
\definecolor{gg0}{HTML}{E24A33}
\definecolor{gg1}{HTML}{348ABD}
\definecolor{gg2}{HTML}{988ED5}
\definecolor{gg3}{HTML}{777777}
\definecolor{gg4}{HTML}{FBC15E}
\definecolor{gg5}{HTML}{8EBA42}
\definecolor{gg6}{HTML}{FFB5B8}
\pgfplotsset{
	/pgfplots/colormap={bright}{rgb255=(0,0,0) rgb255=(78,3,100) rgb255=(2,74,255)
			rgb255=(255,21,181) rgb255=(255,113,26) rgb255=(147,213,114) rgb255=(230,255,0)
			rgb255=(255,255,255)}
}
\newcommand{\pxx}{\partial_{xx}}
\newcommand{\addappendix}{
	\section*{\appendixname}
	\addcontentsline{toc}{section}{\appendixname}
	\counterwithin*{figure}{section}
	\stepcounter{section}
	\renewcommand{\thesection}{A}
	\renewcommand{\thefigure}{\thesection.\arabic{figure}}
}
\def\bm{\boldsymbol}
\newcommand{\ad}{^{\textrm{ad}}}
\newcommand{\dif}{^{\textrm{dif}}}
\definecolor{brandeisblue}{rgb}{0.0, 0.44, 1.0}
\definecolor{lincolngreen}{rgb}{0.11, 0.35, 0.02}
\definecolor{indiagreen}{rgb}{0.07, 0.53, 0.03}
\definecolor{venetianred}{rgb}{0.78, 0.03, 0.08}
\definecolor{darkorange}{rgb}{1.0, 0.55, 0.0}
\definecolor{burntorange}{rgb}{0.8, 0.33, 0.0}
\definecolor{flame}{rgb}{0.89, 0.35, 0.13}
\definecolor{non-photoblue}{rgb}{0.64, 0.87, 0.93}  
\renewcommand{\review}[2]{}
\renewcommand{\creview}[3]{}
\renewcommand{\ntcreview}[3]{}
\renewcommand{\tableofcontents}{}
\renewcommand{\listofreviews}{}
\def\csname ver@etex.sty\endcsname{3000/12/31}
\definecolor{revisionColourOne}{RGB}{180,0,0}
\definecolor{revisionColourTwo}{RGB}{0,0,180}
\newcommand{\st}{_\infty}
\begin{document}
\begin{singlespace}\maketitle\end{singlespace}
\begin{abstract}
	We propose a new fractional Laplacian for bounded domains, expressed as a conservation law and thus particularly suited to finite-volume schemes. Our approach permits the direct prescription of no-flux boundary conditions. We first show the well-posedness theory for the fractional heat equation. We also develop a numerical scheme, which correctly captures the action of the fractional Laplacian and its anomalous diffusion effect. We benchmark numerical solutions for the L\'{e}vy-Fokker-Planck equation against known analytical solutions. We conclude by numerically exploring properties of these equations with respect to their stationary states and long-time asymptotics.
\end{abstract}
 \subjectclassification{\subjectPDF}
\keywords{\keywordsPDF} 
\tableofcontents
\listofreviews

\section{Introduction}

The aim of this work is the design of a finite-volume numerical scheme to approximate the solution of the non-local diffusion problem given by the fractional heat equation and the related L\'{e}vy-Fokker-Planck equation. The fractional heat equation is defined in $\Rd$ as
\begin{equation}\label{eq:fracHE}
	\pder{\rho}{t} = - \prt{-\laplace}^{\frac{\alpha}{2}}\rho
\end{equation}
for $0 < \alpha \leq 2$. The so-called \textit{fractional Laplacian}, $\prt{-\laplace}^{\frac{\alpha}{2}}\rho$, can be formally defined by its Fourier symbol $\abs{\xi}^\alpha \hat\rho$, although it admits up to ten equivalent definitions (see \cite{Kwasnicki2017}). There is a suitable self-similar change of variables that leads to $\pder{\rho}{t} = \nabla \cdot (x\rho) - \prt{-\laplace}^{\frac{\alpha}{2}}\rho$, a particular case of L\'{e}vy-Fokker-Planck equation given by
\begin{equation}\label{eq:fracFP}
	\pder{\rho}{t} = \nabla \cdot (\beta x\rho) - \prt{-\laplace}^{\frac{\alpha}{2}}\rho
\end{equation}
for $0 < \alpha \leq 2$ and $\beta\geq 0$. Notice that this equation generalises the usual Fokker-Planck equation $\pder{\rho}{t} = \nabla \cdot (\beta x\rho) + \laplace \rho$ by replacing the Laplacian with a fractional operator, see \cite{biler2003generalised,gentil2008levy}.

Fractional diffusion (in particular, the fractional Laplacian) has been shown to be the mean field limit of L\'{e}vy walks under certain scalings \cite{Valdinoci2009}. This kind of stochastic process consists in the random movement of particles in space, subject to a probability that allows long jumps with a polynomial tail. Such random walks are long range stochastic processes and they are generally considered more realistic in the modelling of certain biological phenomena \cite{E06,LR09,BC10,DiNezzaPalatucciValdinoci2012,LS19}.

The inverse Fourier transform of the symbol $\abs{\xi}^\alpha \hat\rho$ yields, after some work, the \textit{Riesz} or \textit{singular integral} definition of the fractional Laplacian:
\begin{align}\label{eq:singularFL}
	\prt{-\laplace}^{\frac{\alpha}{2}} \rho(x) \coloneqq
	\curlyC\prt{d,\alpha}
	\;\textrm{p.v.}
	\int_\Rd
	\frac{\rho(x)-\rho(y)}{\abs*{x-y}^{d+\alpha}}
	\dy,
\end{align}
where the integral is understood in the Cauchy principal value sense in order to overcome the singularity. The constant $\curlyC\prt{d,\alpha}$, a term which arises in the computation of the inverse transform of $\abs{\xi}^\alpha$, is given by
\begin{align}\label{eq:constantFL}
	\curlyC\prt{d,\alpha}
	=
	\frac{
		2^\alpha \Gamma\prt*{\frac{d+\alpha}{2}}
	}{
		\pi^{\frac{d}{2}} \abs*{\Gamma\prt*{-\frac{\alpha}{2}}}
	}.
\end{align}
Using the Riesz potential, \cref{eq:fracFP} can be formally written in divergence form as
\begin{equation}\label{eq:fluxFP}
	\pder{\rho}{t} + \div F = 0,
	\quad\text{where }
	F =
	-\beta x\rho
	+
	\grad \brk*{
	\prt{-\laplace}^{\frac{\alpha}{2}-1} \rho
	}.
\end{equation}
The advantage of this form is that the fractional operator now appears with a negative exponent. In this case, the inverse Fourier transform of the symbol $\abs{\xi}^{-\alpha} \hat\rho$ yields (see \cite[Chapter 5]{Stein1970})
\begin{align}\label{eq:inverseFL}
	\prt{-\laplace}^{-\frac{\alpha}{2}} \rho(x) =
	\curlyC\prt{d,-\alpha}
	\int_\Rd
	\frac{\rho(y)}{\abs*{x-y}^{d-\alpha}}
	\dy
\end{align}
whenever $0<\alpha<d$. This form for the inverse operator bypasses the singularity altogether. \Cref{eq:fluxFP} can therefore be rewritten as
\begin{equation}\label{eq:fluxFP2}
	\pder{\rho}{t} + \div F = 0,
	\quad\text{where }
	F =
	-\beta x\rho
	+
	\curlyC\prt{d,\alpha-2}
	\grad
	\int_\Rd
	\frac{\rho(y)}{\abs*{x-y}^{d+\alpha-2}}
	\dy
\end{equation}
whenever $\alpha > 2-d$. Therefore, in dimension one this formulation is only valid for $1 < \alpha \leq 2$; in higher dimensions, for $0 < \alpha \leq 2$. This new form of the equation has two advantages: the first, that the fractional operator is no longer singular; and the second, that an equation in divergence form lends itself to be discretised in the finite-volume fashion. Finite volume schemes have been used with success to produce structure preserving schemes for equations in divergence form of gradient flow type and related systems, see \cite{CCH2015,BCH2020,BCM2020,CFS20}. This is a departure from the numerical methods for fractional diffusions that have been developed in the past, where the literature has been focused on finite-element and finite-difference methods
\cite{dT14,HuangOberman2014,NOS15,AB2017,CdTGP18,AG2018,BBN2018,LPG2020}. We also highlight several spectral methods \cite{mao2017hermite,sheng2020fast,cayama2021pseudospectral,xu2021asymptotic} which deal exclusively with problems on unbounded domains.

For the sake of computation, we would like to pose \cref{eq:fracFP} on an open bounded domain $\Omega\subset\Rd$. There are several non-equivalent definitions of fractional-type Laplacians on bounded domains that can be obtained as suitable restrictions of the definitions in $\Rd$ (see \cite[Section 1.2]{AbatangeloGomezCastroVazquez2022} and the references therein). In this work, we construct a new fractional Laplacian by restricting \cref{eq:fluxFP2} to the domain $\Omega$, prescribing zero-flux conditions for the divergence, and extending the density as $\rho\equiv 0$ on $\Rd\setminus\Omega$ in order to ensure that the non-local operator is well-defined. Thus, our interpretation of the L\'{e}vy-Fokker-Planck equation on a bounded domain is
\begin{equation}\label{eq:BcFP}
	\left \{
	\begin{aligned}
		 & \pder{\rho}{t} + \div F = 0,     \\
		 & F = -\beta x\rho
		+
		\curlyC\prt{d,\alpha}
		\grad
		\int_\Omega
		\frac{\rho(y)}{\abs*{x-y}^{d+\alpha-2}}
		\dy,                                \\
		 & F \cdot n_{\partial \Omega} = 0.
	\end{aligned}
	\right.
\end{equation}
In the absence of a drift term (when $\beta=0$), we also obtain an interpretation of the fractional heat equation on a bounded domain.
Notice, however, that the steady states of this problem will satisfy
\begin{equation}
	\int_\Omega
	\frac{\rho_\infty(y)}{\abs*{x-y}^{d+\alpha-2}}
	\dy = C,
\end{equation}
which causes $\rho_\infty$ to be singular on the boundary. However, for $\beta > 0$, our numerical results on suitably scaled quadrangular domains show that the numerical steady state is very similar to the self-similar profile of the $\Rd$ case (see \cref{sec:steady2D,sec:steady1D}).

The rest of this work is organised as follows: in \cref{sec:theory} we study the well-posedness of \cref{eq:BcFP}, distinguishing the cases $\beta=0$ and $\beta>0$; in \cref{sec:numericalSchemes} we introduce a finite-volume numerical scheme for \cref{eq:BcFP} in one dimension, and then generalise it to higher dimensions via dimensional splitting; we conclude in \cref{sec:numericalExperiments} by validating our schemes against known analytical results.
 \section{A new fractional Laplacian in bounded domains}\label{sec:theory}

The aim of this section is to establish a well-posedness theory for the new fractional operator on a bounded domain introduced above. We first define the Riesz kernel and the extension operator
\begin{equation}
	\mathcal I_{\gamma} [u] (x) = \mathcal{C}(d, -\gamma) \int_{\mathbb{R}^{d}} \frac{u(y)}{|x-y|^{d-\gamma}}d y,
	\qquad \qquad
	\mathcal E [u] (x) =
	\begin{dcases}
		u(x) & \text{if } x \in \Omega, \\
		0    & \text{otherwise},
	\end{dcases}
\end{equation}
where we assume that $0 < \gamma < d$ for the operator to be well defined. The operator $\mathcal I_{\gamma} : L^p (\mathbb R^d) \to L^q(\mathbb R^d)$ has been widely studied. Note that the flux in \cref{eq:BcFP} reduces, whenever $\beta = 0$, to
\begin{equation}
	F = \nabla \Big( \mathcal I_{2(1-\frac{\alpha}{2})} \mathcal E u \Big).
\end{equation}
Thus, besides $\alpha \in (0,2)$, we require $0 < 2(1-\frac{\alpha}{2}) < d$ for well-posedness, i.e. $\alpha > 2-d$. This is only restrictive in dimension $d = 1$.

Let $\mathcal B = \mathcal I_{2(1-\frac{\alpha}{2})} \mathcal E$. If $\Omega = \Rd$, then $\mathcal B = (-\Delta)^{-(1-\frac \alpha 2)}$, an inverse fractional Laplacian. Our new diffusion operator is therefore $\mathcal A = -\Delta \mathcal B u$, and we denote its domain by $D(\mathcal{A})$. \Cref{eq:BcFP} can be now written in the $\beta = 0$ case as
\begin{equation}
	\label{eq:heat}
	\begin{dcases}
		\frac{\partial u}{\partial t} = \Delta \mathcal B u & \text{in } (0,\infty) \times \Omega, \\
		\frac{\partial (\mathcal B u) }{\partial n} = 0     & \text{on } (0,\infty) \times \Omega.
	\end{dcases}
\end{equation}

\begin{theorem}
	There exists a unique semigroup $\mathcal S(t) : L^2 (\Omega) \to L^2 (\Omega)$ of solutions of \eqref{eq:heat}. In fact, if $\mathcal B u_0 \in H^2(\Omega) $ then $\mathcal B u(t) \in H^2 (\Omega)$ for all times and the equation is satisfied in the operator sense.
\end{theorem}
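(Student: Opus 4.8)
The natural strategy is to recognise $\mathcal A = -\Delta \mathcal B$ as (the inverse of) a positive self-adjoint operator on a suitable Hilbert space and invoke the theory of analytic semigroups generated by such operators. First I would set up the correct functional-analytic framework. The operator $\mathcal B = \mathcal I_{2-\alpha} \mathcal E$ is, by the mapping properties of the Riesz potential recalled in the excerpt, a bounded, positive, self-adjoint operator on $L^2(\Omega)$ (positivity because $\mathcal I_{2-\alpha}$ has a positive symbol $|\xi|^{-(2-\alpha)}$ and $\mathcal E$ is the adjoint of restriction; self-adjointness follows from the symmetry of the Riesz kernel $|x-y|^{-(d+\alpha-2)}$). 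I would therefore define the inner product $\langle u, v \rangle_{\mathcal B} = \langle \mathcal B u, v\rangle_{L^2(\Omega)}$ and work on the Hilbert space $\mathcal H$ obtained by completing $L^2(\Omega)$ (or a suitable dense subspace) in this norm — morally a fractional-order negative Sobolev space. The point of this choice is that $\mathcal A$ becomes symmetric and accretive with respect to $\langle\cdot,\cdot\rangle_{\mathcal B}$: for smooth $u$ with the no-flux condition $\partial_n(\mathcal B u) = 0$,
\begin{equation}
	\langle \mathcal A u, u\rangle_{\mathcal B} = -\langle \Delta \mathcal B u, \mathcal B u\rangle_{L^2(\Omega)} = \int_\Omega |\nabla \mathcal B u|^2 \geq 0,
\end{equation}
after integration by parts, the boundary term vanishing precisely by the Neumann condition built into $D(\mathcal A)$.

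Next I would construct the semigroup rigorously. Rather than verifying Hille–Yosida/Lumer–Phillips estimates by hand, the cleanest route is the Lax–Milgram / form method: define the bilinear form $a(u,v) = \int_\Omega \nabla(\mathcal B u)\cdot\nabla(\mathcal B v)\,dx$ with form domain $V = \{u : \mathcal B u \in H^1(\Omega)\}$, show it is continuous and coercive (up to a lower-order term) on $V$ relative to the pivot space $\mathcal H$, and appeal to the standard theorem that a densely defined, symmetric, closed, bounded-below form generates an analytic self-adjoint semigroup $\mathcal S(t)$ on $\mathcal H$. Because $\mathcal B$ is bounded on $L^2(\Omega)$, restricting $\mathcal S(t)$ to data in $L^2(\Omega)$ and checking that it maps $L^2(\Omega)$ into $L^2(\Omega)$ gives the stated semigroup $\mathcal S(t): L^2(\Omega)\to L^2(\Omega)$; uniqueness is immediate from the uniqueness of the solution to the abstract Cauchy problem $u' = -\mathcal A u$ in $\mathcal H$, which in turn follows from accretivity.

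For the regularity claim, I would argue that if $\mathcal B u_0 \in H^2(\Omega)$ then, since the self-adjoint operator $-\Delta$ with Neumann boundary conditions on the bounded (smooth, or at least Lipschitz/convex) domain $\Omega$ has domain contained in $H^2(\Omega)$ by elliptic regularity, the quantity $v(t) = \mathcal B u(t)$ solves the heat-type equation $\partial_t v = \mathcal B \Delta v$ — equivalently $v$ stays in the domain of the Neumann Laplacian — so that $v(t) \in H^2(\Omega)$ for all $t$ and $\partial_t u = \Delta \mathcal B u$ holds as an identity in $L^2(\Omega)$, i.e. in the operator sense. Concretely: applying $\mathcal B$ to the equation shows $v = \mathcal B u$ satisfies an abstract evolution equation governed by the Neumann Laplacian conjugated by $\mathcal B$, and analyticity of the semigroup propagates the $H^2$ regularity of the initial datum.

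The main obstacle I anticipate is the precise identification of the space $\mathcal H$ and of $D(\mathcal A)$, together with the density and closedness statements needed to apply the form method: one must show that $V$ is dense in $\mathcal H$, that the no-flux condition $\partial_n(\mathcal B u)=0$ is the correct natural boundary condition encoded weakly by the form $a$, and that the singular behaviour of $\mathcal B u$ near $\partial\Omega$ (flagged in the introduction, where the stationary state is singular on the boundary) does not obstruct the integration by parts. A careful treatment of the mapping properties of $\mathcal I_{2-\alpha}\mathcal E$ between Sobolev scales — in particular that $\mathcal B: L^2(\Omega) \to H^{2-\alpha}(\Omega)$ or a similar gain — is what makes the coercivity estimate work, and getting the boundary regularity theory right on a general bounded domain is the delicate point; restricting to smooth or convex $\Omega$ would be a reasonable simplification.
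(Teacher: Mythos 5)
Your overall strategy --- realise $\mathcal A=-\Delta\mathcal B$ as a non-negative self-adjoint operator associated with the symmetric form $a(u,v)=\int_\Omega\nabla(\mathcal B u)\cdot\nabla(\mathcal B v)$ and generate the semigroup by the form method / Lax--Milgram --- is essentially the skeleton of the paper's proof, which verifies the Hille--Yosida hypotheses for $\mathcal A$ directly in $L^2(\Omega)$ and solves the resolvent equation $u+\mathcal A u=f$ by Lax--Milgram after the substitution $w=\mathcal B^{1/2}u$. Your change of inner product $\langle u,v\rangle_{\mathcal B}=\langle\mathcal B u,v\rangle_{L^2}$ is the same conjugation-by-$\mathcal B^{1/2}$ device in different clothing. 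The paper, however, first establishes that $\mathcal B$ is compact (since $\mathcal I_{2-\alpha}$ is compact on compactly supported $L^2$ functions), self-adjoint, and has strictly positive eigenvalues; the spectral theorem then supplies $\mathcal B^{\pm 1/2}$, $\mathcal B^{-1}$, and a concrete description of $D(\mathcal A)=L^2(\Omega)\cap\mathcal B^{-1}(H^2(\Omega))$ with the Neumann condition, from which the $H^2$ persistence claim is just the standard Hille--Yosida statement that strong solutions remain in $D(\mathcal A)$ --- no separate analyticity argument is needed.

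Two points in your write-up are genuine gaps rather than routine checks. First, for $\langle\cdot,\cdot\rangle_{\mathcal B}$ to be an inner product (and for $\mathcal B^{-1/2}$, which you use implicitly, to exist) you need $\mathcal B$ to be injective, not merely non-negative; the paper proves this by observing that $(-\Delta)^{1-\frac{\alpha}{2}}(\mathcal B\varphi)=\mathcal E\varphi$ on $\mathbb R^d$, so $\mathcal B\varphi=0$ forces $\varphi=0$ a.e.\ in $\Omega$. Second, and more seriously, the theorem asserts a semigroup on $L^2(\Omega)$, whereas the form method in your pivot space produces a semigroup on the completion $\mathcal H$, which is strictly larger than $L^2(\Omega)$ because the $\mathcal B$-norm is weaker. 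Your accretivity computation $\langle\mathcal A u,u\rangle_{\mathcal B}=\int_\Omega|\nabla\mathcal B u|^2\ge 0$ gives contractivity in $\mathcal H$, not in $L^2$; to conclude that $\mathcal S(t)$ preserves $L^2(\Omega)$ you need the unweighted estimate $\int_\Omega u\,\mathcal A u=\int_\Omega\nabla u\cdot\nabla\mathcal B u\ge 0$, which is not obvious and is precisely what the paper proves by writing it as $\int_\Omega|\nabla\mathcal B^{1/2}w|^2$ with the square root coming from the spectral decomposition. ``Checking that it maps $L^2$ into $L^2$'' is therefore not an afterthought but the step where the real work lives; without it your argument establishes well-posedness in $\mathcal H$ rather than the stated result.
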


\begin{proof}
	We will first justify the well-posedness of this problem using the Hille-Yosida theorem applied to the operator $\mathcal A$ with the Neumann condition in $L^2 (\Omega)$ (see \cite[Theorem 7.4]{Brezis2010}).

	Let us construct $D(\mathcal A)$. We begin by remarking that $\mathcal B: L^2 (\Omega) \to L^2(\Omega)$ is self-adjoint, since
	\begin{equation}
		\int_\Omega v(x) \mathcal B u (x) \dx
		= \int_\Omega \int_\Omega \frac {u(y) v(x)} {|x-y|^{d-2(1-\frac{\alpha}{2})}} \dx \dy
		= \int_\Omega u(y) \mathcal B v (y) \dy.
	\end{equation}
	Since $\mathcal I_\alpha$ is a compact operator on $L^2_c (\Rd)$, so is $\mathcal B$ in $L^2 (\Omega)$. Thus, by the spectral theorem, there exists a basis of $L^2(\Omega)$ of orthonormal eigenfunctions $\varphi_i$ of $\mathcal B$ with eigenvalues $\lambda_i \to 0$, and,
	defining $u_i = \int_\Omega u \varphi_i \dx$, it holds
	\begin{equation}
		u(x) = \sum_{i=1}^\infty u_i \varphi_i(x) , \qquad \text{and} \qquad \mathcal B u(x) = \sum_{i=1}^\infty \lambda_i u_i \varphi_i(x) .
	\end{equation}
	Furthermore, with this construction $\|u\|_{L^2} = \sum_i |u_i|^2$.

	Let us show that $\lambda_i \ge 0$. Defining $U \defeq \mathcal B u$, notice that $(-\Delta)^{1-\frac{\alpha}{2}} U = \mathcal E (u)$ in $\Rd$. Therefore, for $u \in C_c^\infty (\Omega)$, we have
	\begin{equation}
		\int_\Omega u \mathcal B u = \int_\Omega U (-\Delta)^{1-\frac{\alpha}{2}} U = \int_\Rd U (-\Delta)^{1-\frac{\alpha}{2}} U = \int_\Rd \left|(-\Delta)^{ \frac{1-\frac{\alpha}{2}}{2} } U \right|^2 \ge 0.
	\end{equation}
	Hence $\lambda_i \ge 0$.

	We now show that $\lambda_i > 0$ for all $i$. Suppose, to the contrary, that $\lambda_i = 0$ for some $i$. We therefore have that $U \defeq \mathcal B \varphi_i = 0$. But then, a.e. in $\Omega$ we have that $\varphi_i = \mathcal E(\varphi_i) = (-\Delta)^{1-\frac{\alpha}{2}} U = 0$, a contradiction.

	Therefore, we can formally define the operator $\mathcal B^{-1}$ through the series $\mathcal B^{-1}u (x) = \sum_i \lambda_i^{-1} u_i \varphi_i(x)$.
	We define
	\begin{equation}
		D(\mathcal A) = \left\{ u = \mathcal B^{-1} v : v \in H^2 (\Omega), \nabla v \cdot n = 0 \text{ on } \partial \Omega \text{, and } \sum_i \lambda_i^{-2} |v_i|^2 < \infty \right\} .
	\end{equation}
	Notice, by construction, that $D(\mathcal A) = L^2 (\Omega) \cap \mathcal B^{-1} (H^2(\Omega))$. Since $\lambda_i > 0$, this set is not empty.
	Then $\mathcal A : D(\mathcal A) \subset L^2 (\Omega) \to L^2 (\Omega) $.

	Now we check that $\mathcal A$ is monotone. Take $u \in D(\mathcal A)$. Due the spectral decomposition of $\mathcal B$ it admits a square root and inverse square root $\mathcal B^{\pm \frac 1 2}$; take $w = \mathcal B^{\frac 1 2} u$. Then, due to the Neumann boundary condition
	\begin{equation}
		\int_\Omega u (\mathcal Au) = \int_\Omega \nabla u \cdot \nabla \mathcal Au = \int_\Omega |\nabla \mathcal B^{\frac 1 2} w|^2 \ge 0.
	\end{equation}

	Lastly, we check that $\mathcal A$ is maximal monotone. Take $f \in L^2(\Omega)$; we want to show there exists $u \in D(\mathcal A)$ such that $u + \mathcal A u = f$. Consider the weak formulation
	\begin{equation}
		\int_\Omega u \varphi + \int_\Omega \nabla \mathcal B u \cdot \nabla \varphi = \int_\Omega f \varphi , \qquad \forall \varphi \in H^1 (\Omega).
	\end{equation}
	Letting again $w = \mathcal B^{\frac 1 2} u$ and $\psi = \mathcal B^{-\frac 1 2} \varphi$, we obtain
	\begin{equation}
		\int_\Omega w \psi + \int_\Omega \nabla \mathcal B^{\frac 1 2} w \cdot \nabla \mathcal B^{\frac 1 2} \psi = \int_\Omega \mathcal B^{-\frac 1 2} w \mathcal B^{\frac 1 2} \psi + \int_\Omega \nabla \mathcal B^{\frac 1 2} w \cdot \nabla \mathcal B^{\frac 1 2} \psi = \int_\Omega f \mathcal B^{\frac 1 2} \psi .
	\end{equation}
	This is a problem of the form $a(w,\psi) = L(\psi)$ where $w,\psi \in V = \mathcal B^{-\frac 1 2}(H^1 (\Omega)) \cap L^2 (\Omega)$, where the bilinear form $a$ is symmetric and continuous in $V$. Hence, it can be solved using the Lax-Milgram theorem. A posteriori, it is trivial to verify that $u \in D(\mathcal A)$.

	We now satisfy all the hypotheses of the Hille-Yosida theorem. Thus, if $u_0 \in D(\mathcal A)$, then $\mathcal S(t) u_0$ is a solution in the strong sense, i.e. $u \in C([0,\infty), D(A)) \cap C^1([0,\infty), L^2(\Omega))$ and the equation is satisfied.
\end{proof}

The problem \eqref{eq:BcFP} is not purely diffusive when $\beta > 0$, hence the existence does not follow directly from the Hille-Yosida (or Lumer-Phillips) theorems. Unlike in $\Rd$, it cannot be deduced from the diffusive problem by a change of variables; that would lead to a domain $\Omega_t$ that evolves in time. Thus the theory of well-posedness for \eqref{eq:BcFP} when $\beta > 0$ is an open problem. A sensible approach would be to prove the convergence of our numerical scheme below.

 \section{Numerical schemes}\label{sec:numericalSchemes}

The thrust of this work is the discretisation of the fractional Laplacian term in \cref{eq:BcFP}. First we introduce the scheme in one spatial dimension, in order to highlight the technique used to approximate the fractional term, and then we generalise it to higher dimensions. Our discretisation of the advection term follows previous finite-volume works for generalised Fokker-Planck equations.

\subsection{One dimension}\label{sec:numericalSchemes1D}

In one dimension, we construct a scheme for \cref{eq:BcFP} in the range $1<\alpha<2$. We consider, without loss of generality, a domain $\Omega = (-R,R)$, and divide it into $N$ cells $C\i = \brk{x\imh, x\ih}$, for $i=1,\cdots,N$. Each cell is centred at the points $x\i$, where $x\i=-R+(i-1/2)\Dx$. For simplicity, we assume a uniform grid with cell size $\Dx = 2RN^{-1}$.

We denote by $\bar{\rho}\i(t)$ the average of the solution $\rho(t,x)$ over the $i$-th cell:
\[ \bar{\rho}\i(t) = \frac{1}{\Dx} \int_{C\i} \rho(t, x) \dx. \]
Then, equation \eqref{eq:BcFP} can be integrated on each cell $C\i$ to yield
\begin{equation}
	\der{\bar{\rho}\i(t)}{t} + \frac{F\brk*{\rho\prt{t,x\ih}} - F\brk*{\rho\prt{t,x\imh}}}{\Dx} = 0, \quad i=1, \cdots, N,
\end{equation}
which we approximate as
\begin{subequations}\label{eq:scheme1D}
\begin{align+}\label{ODEsys1D}
\der{\bar{\rho}\i(t)}{t}
+ \frac{F\ih(t) - F\imh(t)}{\Dx}
= 0,
\quad i=1, \cdots, N.
\end{align+}
The flux $F$ is split into an advective part $F\ad$ and a diffusive part $F\dif$:
\begin{align+}
F\ih(t) = F\ad\ih(t) + F\dif\ih(t).
\end{align+}
The advection flux corresponds to the discretisation of the Fokker-Planck term $\beta \div\prt{\rho x}$; here we follow the discretisation of \cite{CCH2015,BCH2020}:
\begin{align+}\label{eq:1DAdvectionFlux}
F\ad\ih(t)
&=
\bar\rho\i(t)\pos{v\ih}
+ \bar\rho\ip(t)\neg{v\ih},
\end{align+}
where
\begin{align+}\label{eq:1DAdvectionVelocity}
v\ih
=
-\frac{\xi\ip-\xi\i}{\Dx}
\text{ and }
\xi\i
=
\beta
\frac{\abs{x\i}^2}{2},
\end{align+}
for $\pos{s}=\max\set{0,s}$ and $\neg{s}=\min\set{0,s}$.

To treat the diffusion, the gradient term
$
	\curlyC\prt{1,\alpha-2}
	\grad
	\int_\Rd
	\rho(y)\abs*{x-y}^{1-\alpha}
	\dy
$
is replaced by the difference
\begin{align+}\label{eq:1DDiffusionFlux}
F\dif\ih(t)
=
\frac{I(t,x\ip) - I(t,x\i)}{\Dx}.
\end{align+}
The term $I(t,x\i)$ is the approximation of the integral $\mathcal I_{2-\alpha}[\rho](x\i)$, given as a discrete sum by
\begin{align+}\label{eq:integralTerms}
I(t,x\i)
& =
\sum_{k = 1}^{N}
\bar{\rho}\k(t)
I\k(x\i),
\quad\textrm{where}\quad
I\k(x\i)
\coloneqq
\mathcal{C}(1, \alpha - 2)
\int_{C\k} \abs*{x\i - y}^{1 - \alpha} \dy.
\end{align+}

To conclude, we impose no-flux boundary conditions:
\begin{align+}\label{eq:noflux1D}
F_{1-\nhalf}(t) = F_{N+\nhalf}(t) \equiv 0.
\end{align+}
\end{subequations}

\begin{remark}[Linearity]
	Scheme \eqref{eq:scheme1D} is linear. We can rewrite \cref{ODEsys1D} as a system
	\begin{equation}
		\der{\bbrho(t)}{t} + A \bbrho(t) = 0,
	\end{equation}
	for a vector
	$\bbrho =
		\begin{pmatrix}
			\bar\rho_1
			 & \cdots
			 & \bar\rho\i
			 & \cdots
			 & \bar\rho_N
		\end{pmatrix}^\top
	$.
	As with the flux, the constant matrix $A$ can be split into an advective part and a diffusive part, $A=A\ad+A\dif$. The advection matrix is simply
	\begin{align}\label{eq:advectionMatrix}
		A\ad & =
		\frac{\beta}{\Dx}
		\begin{pmatrix}
			\pos{v_{1+\nhalf}} & \neg{v_{1+\nhalf}}
			\\
			\ddots             & \ddots             & \ddots
			\\
			                   & -\pos{v\imh}       & -\neg{v\imh}+\pos{v\ih} & \neg{v\ih}
			\\
			                   &                    & \ddots                  & \ddots              & \ddots
			\\
			                   &                    &                         & -\pos{v_{N-\nhalf}} & -\neg{v_{N-\nhalf}}
		\end{pmatrix}
		.
	\end{align}
	The diffusion matrix can be written as the product of a discrete Laplacian and a dense matrix, $A\dif=LD$, where
	\begin{align}\label{eq:diffusionMatrix}
		L & =
		-\frac{1}{\Dx^2}
		\begin{pmatrix}
			-1 & 1
			\\
			1  & -2     & 1
			\\
			   & \ddots & \ddots & \ddots
			\\
			   &        & 1      & -2     & 1
			\\
			   &        &        & 1      & -1
		\end{pmatrix}
		, \quad
		D =
		\begin{pmatrix}
			I_1(x_1) & I_2(x_1) & \hdots & I_N(x_1)
			\\
			I_1(x_2) & I_2(x_2) & \hdots & I_N(x_2)
			\\
			\vdots   & \vdots   & \ddots & \vdots
			\\
			I_1(x_N) & I_2(x_N) & \hdots & I_N(x_N)
		\end{pmatrix}
		.
	\end{align}
\end{remark}

\begin{remark}[Symmetry]
	The terms in \eqref{eq:integralTerms} are symmetric, $I\k(x\i)=I\i(x\k)$. The matrix $D$ is thus symmetric, and it can be constructed by evaluating only $N$ terms.
\end{remark}

\begin{remark}[Higher order advection]\label{rm:higherOrder}
	The discretisation of the fractional diffusion term is consistent to second order, a fact which will be verified in \cref{sec:numericalExperiments}. However, the treatment of the advection term described above is only first-order accurate. A higher order discretisation (eg. flux limiters, MUSCL) may be used, at the expense of the linearity of the scheme. An example is presented in the Appendix.
\end{remark}

\begin{remark}[Time discretisation]
	In practice, we discretise scheme \eqref{eq:scheme1D} on the interval $t\in\prt{0,T}$ using a uniform step $\Delta t$. For the sake of stability, we employ the implicit time discretisation
	\begin{equation}
		\bbrho\mp = \prt{\textrm{Id}+\Dt A}^{-1}\bbrho\m,
	\end{equation}
	where $\textrm{Id}$ is the identity matrix. The update matrix $\prt{\textrm{Id}+\Dt A}^{-1}$ is computed once, offline, for each mesh size $(\Dt,\Dx)$, and then stored for successive use.
\end{remark}

\begin{remark}[The range $\alpha \leq 1$]
	The scheme presented in \cref{sec:numericalSchemes1D} is only valid in the range $1<\alpha<2$ due to the inversion formula \eqref{eq:inverseFL} used to rewrite the L\'{e}vy-Fokker-Planck equation as \eqref{eq:fluxFP2}. The range $0<\alpha\leq 1$ can be handled using instead the inversion formulae for the Poisson problem on the ball. The relevant kernels are given in \cite[Section 3]{Bucur2016}:
	\begin{align}
		I\k(x\i)
		 & =
		\frac{1}{\pi}
		\int_{x\kmh}^{x\kh}
		\log\prt*{
			\frac{
				R^2 - x\i y + \sqrt{\prt{R^2-x\i^2}\prt{R^2-y^2}}
			}{
				R\abs{x\i - y}
			}
		}\dy
		,\quad\text{for }\alpha=1;
		\\
		I\k(x\i)
		 & =
		\kappa\prt*{1, \alpha}
		\int_{x\kmh}^{x\kh}
		\abs{x\i - y}^{\alpha - 1}
		\int_0^{r_0\prt{x\i, y}}
		\frac{t^{\frac{\alpha}{2}-1}}{\prt{t+1}^{\frac{1}{2}}}
		\dt\dy
		,\quad\text{for } 0 < \alpha < 1;
	\end{align}
	where
	\begin{align}
		r_0\prt{x\i,y}
		=
		\frac{
			\prt{R^2-\abs{x\i}^2}\prt{R^2-\abs{y}^2}
		}{
			R^2\abs{x\i - y}^2
		},
		\quad
		\kappa\prt*{1, \alpha} = \frac{\Gamma(\frac{1}{2})}{2^{\alpha} \pi^{\frac{1}{2}} \Gamma^{2}(\frac{\alpha}{2})}.
	\end{align}
	Unfortunately, this approach renders the matrix $D$ no longer symmetric. We will not address this case directly.
\end{remark}

 \subsection{Two dimensions}

In two dimensions, the inversion formula \eqref{eq:inverseFL} no longer restricts the fractional exponent. Therefore, we construct a scheme for \cref{eq:BcFP} in the range $0<\alpha<2$.

We consider without loss of generality a square domain $\Omega = (-R,R)^2$, divided into $N^2$ cells given by $C\ij = \brk{x\imh, x\ih} \times \brk{y\jmh, y\jh}$, for $i,j=1,\cdots,N$. Each cell is centred at the points $\prt*{x\i,y\j}$, where $x\i=-R+(i-1/2)\Dx$, $y\j=-R+(j-1/2)\Dy$, and $\Dx = \Dy = 2RN^{-1}$. As in \cref{sec:numericalSchemes1D}, we approximate the cell averages of equation \eqref{eq:BcFP} to arrive at
\begin{subequations}\label{eq:scheme2D}
\begin{align+}\label{ODEsys2D}
\der{\bar{\rho}\ij(t)}{t}
+ \frac{F\ihj(t) - F\imhj(t)}{\Dx}
+ \frac{G\ijh(t) - G\ijmh(t)}{\Dy}
= 0,
\quad i, j = 1, \cdots, N.
\end{align+}
Once again, the fluxes are split into an advective part $F\ad$ and a diffusive part $F\dif$:
\begin{align+}
F\ihj(t) = F\ad\ihj(t) + F\dif\ihj(t), \quad
G\ijh(t) = G\ad\ijh(t) + G\dif\ijh(t).
\end{align+}
The advection terms are now
\begin{align+}\label{eq:2DAdvectionFlux}
F\ad\ihj(t)
& =
\bar\rho\ij(t)\pos{v\ihj}
+ \bar\rho\ipj(t)\neg{v\ihj},
\\
G\ad\ijh(t)
& =
\bar\rho\ij(t)\pos{w\ijh}
+ \bar\rho\ijp(t)\neg{w\ijh},
\end{align+}
where
\begin{align+}
v\ihj
=
-\frac{\xi\ipj-\xi\ij}{\Dx},\quad
w\ijh
=
-\frac{\xi\ijp-\xi\ij}{\Dy},\quad
\xi\ij
=
\beta \frac{\abs{x\i}^2+\abs{y\j}^2}{2},
\end{align+}
following \cite{CCH2015,BCH2020}.

The treatment of the diffusive part described in the previous section generalises to two dimensions:
\begin{align+}\label{DiffApprox2d}
F\ihj\dif(t)
=
\frac{I(t, x\ip, y\j) - I(t, x\i, y\j)}{\Dx}
\quad \textrm{and} \quad
G\ijh\dif(t)
=
\frac{I(t, x\i, y\jp) - I(t, x\i, y\j)}{\Dy}.
\end{align+}
The discrete integrals $I(t, x\i, y\j)$ are given by the sum
\begin{align+}\label{Int2DConv}
I(t, x\i, y\j)
=
\sum_{k, l = 1}^{N}
\bar{\rho}\kl(t) I\kl(x\i, y\j),
\end{align+}
where
\begin{align+}
I\kl(x\i, y\j)
\coloneqq
\mathcal{C}(2, \alpha - 2)
\int_{C\kl} \abs{\prt{x\i, y\j} - \prt{u,v}}^{-\alpha} \du\dv.
\end{align+}

Once again, we impose no-flux boundary conditions:
\begin{align+}
F_{1-\nhalf,\,j}(t) = F_{N+\nhalf,\,j}(t) \equiv 0,
\quad
G_{i,\,1-\nhalf}(t) = G_{i,\,N+\nhalf}(t) \equiv 0,
\quad
i, j = 1, \cdots, N.
\end{align+}
\end{subequations}

\subsubsection{Dimensional Splitting}\label{sec:splitting}

As in the one-dimensional case, scheme \eqref{eq:scheme2D} is linear. Writing
\begin{equation}\label{eq:linear_evolution}
	\der{\bbrho(t)}{t} + A \bbrho(t) = 0
\end{equation}
for a vector
$\bbrho =
	\begin{pmatrix}
		\bar{\rho}_{1,1}
		 & \bar{\rho}_{2,1}
		 & \cdots
		 & \bar{\rho}_{N,1}
		 & \bar{\rho}_{1,2}
		 & \cdots
		 & \bar{\rho}_{i,j}
		 & \cdots
		 & \bar{\rho}_{N,N}
	\end{pmatrix}^\top
$, we may split the matrix of the scheme into an advective part and a diffusive part, $A=A\ad+A\dif$, which are two-dimensional generalisations of \eqref{eq:advectionMatrix} and \eqref{eq:diffusionMatrix}.

However, the linear treatment of the scheme becomes impractical here, as the storage required for matrix of the scheme grows exponentially. While $A\ad$ is a banded matrix, $A\dif$ is dense. For $N=2^7$, a dense $N^2\times N^2$ matrix would require 2 gigabytes of RAM. For $N=2^8$, the size would be 16 gigabytes, or 128 gigabytes for $N=2^9$. In order to handle the computation, an approach which does not require the direct inversion of the matrix $\prt{\textrm{Id}+\Dt A}$ is required. The Krylov subspace methods, such as GMRES or BiCGSTAB \cite{Saad2003}, are among the available options. Instead, we resort to a \textit{dimensional splitting} strategy.

The (matrix) operator $A$ in \cref{eq:linear_evolution} is decomposed as $A = A_2 + A_1$, where $A_1$ corresponds to the transport terms along the $x$-direction (those in \cref{ODEsys2D} which arise from the fluxes $F\ihj$), and $A_2$ corresponds to the transport along the $y$-direction (terms related to $G\ijh$); naturally, $A_1$ and $A_2$ are independent of $\bbrho$, as is $A$. Formally, the solution to \eqref{eq:linear_evolution} can be written as $\bbrho(t) = \exp(tA) \bbrho_0$. One would like to approximate this by $\exp(tA_2)\exp(tA_1) \bbrho_0$ (i.e., by solving the problem one dimension at a time) but, in general, the solution operator cannot be factored in that way: $\exp(tA_2+tA_1) \neq \exp(tA_2)\exp(tA_1)$. However, the Lie-Trotter (or Trotter-Kato) formula
	\begin{align}
		\exp\prt*{tA_2+tA_1} =
		\lim\limits_{n\rightarrow\infty}
		\prt*{\exp\prt*{\frac{t}{n}A_2}\exp\prt*{\frac{t}{n}A_1}}^n
	\end{align}
	does hold for general square matrices \cite{trotter1959product} and some linear operators \cite{kato1978trotter}, and has been used to study the convergence of dimensional splitting in the case of linear semi-groups \cite{Ito_1998}. Choosing $\Dt = t n^{-1}$, we see that the exact solution operator $\exp\prt*{tA_2+tA_1}$ can be approximated by applying the operators $\exp\prt*{\Dt A_1}$ and $\exp\prt*{\Dt A_2}$ in an alternating sequence; i.e., the exact solution $\bbrho(t)$ can be approximated by performing a sequence of intermediate updates of $\bbrho_0$, each involving a short time, alternating the $x$-direction and $y$-direction sub-problems. Upon discretising time, the approximate solution $\bbrho\mp$ at time $(m+1)\Dt$ is computed from $\bbrho\m$ (that at time $m\Dt$) via $\bbrho\mh$, an intermediate step; $\bbrho\mh$ is computed from $\bbrho\m$ by solving the $x$-direction problem, and $\bbrho\mp$ is found from $\bbrho\mh$ by solving the $y$-problem.

At this stage, the advantage of the dimensional splitting approach is not clear: the matrices $A_1$ and $A_2$ are dense, as was $A$, so the memory requirement has effectively doubled. However, one further approximation is possible: each of the dimensional updates can be approximately decomposed row-wise or column-wise. For instance, to compute $\bbrho\mh$ from $\bbrho\m$: for each row $j$, compute $\bar{\rho}\mh\ij$ by solving the one-dimensional implicit problem within the row, assuming the value of the density will not change outside of it (i.e. $\bar{\rho}\mh\ik \equiv \bar{\rho}\m\ik$ whenever $k\neq j$). This is done independently on each row, and therefore can be trivially parallelised. A schematic diagram of the update is shown in \cref{fig:splitting}. Each update now involves the inversion of an $N\times N$ matrix, rather than $N^2\times N^2$, though the matrices are no longer independent of $\bbrho\m$. To obtain $\bbrho\mp$, the process is repeated along the $y$-direction, \textit{mutatis mutandis}. 

While this approach to dimensional splitting is partially justified by Lie-Trotter formula above, we will nevertheless justify it numerically in \cref{sec:numericalExperiments}, both in terms of checking the convergence of the scheme and its long-time behaviour.

\begin{figure}
	\centering
	\includegraphics{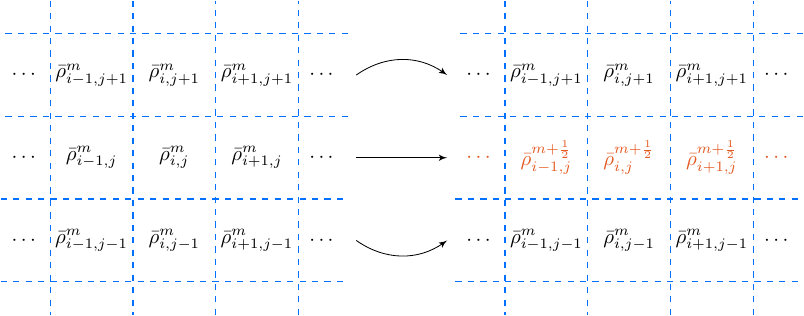}
	\caption{Dimensional splitting, row update. The split implicit problem considers information on the whole domain, but the density is allowed to change only within a single row. These updates take place independently for each row in parallel, and can be parallelised.}
	\label{fig:splitting}
\end{figure}

\begin{remark}[Sweeping dimensional splitting]
	A valid alternative is the \textit{sweeping dimensional splitting} described in \cite{BCH2020}. In that approach, the row and column updates take place sequentially, each considering the updated information from the previous step. This approach can be beneficial in some settings (it was used in \cite{BCH2020} to prove structural properties of the scheme), but was discarded here because it cannot be parallelised.
\end{remark}
 \section{Numerical experiments}\label{sec:numericalExperiments}

\FloatBarrier

We now demonstrate the accuracy and performance of our scheme in a variety of test cases, both in one and two dimensions. We will refer to the fractional heat equation \eqref{eq:fracHE} and the L\'{e}vy-Fokker-Plank equation \eqref{eq:fracFP} in the discussion; however, for the numerics, these are always understood as \cref{eq:BcFP} with $\beta=0$ and $\beta=1$, respectively.

In one dimension we employ scheme \eqref{eq:scheme1D}; in two dimensions we employ scheme \eqref{eq:scheme2D} with the dimensional splitting described in \cref{sec:splitting}. Experiments use the first-order upwind fluxes \eqref{eq:1DAdvectionFlux} and \eqref{eq:2DAdvectionFlux}, unless otherwise stated. The experiments that compute the order of accuracy of the scheme use instead the second-order \textit{minmod} flux \eqref{eq:minmodFlux} presented in the Appendix and discussed in \cref{rm:higherOrder}.

\subsection{One dimension}

\subsubsection{Fractional diffusion}
\label{sec:diffusion}

We first consider the fractional heat equation \eqref{eq:fracHE}. As in the classical heat equation, an explicit self-similar solution on the whole space is known when $\alpha=1$:
\begin{equation}
	\label{ExpKernFH}
	\phi(t,x) = C(d) \frac{t}{\prt*{t^{2}+|x|^{2}}^{\frac{d+1}{2}}}.
\end{equation}
Notice that the problem is linear. We pick $C(d)$ so that $\| \phi \|_{L^1} = 1$, i.e., $C(1) = \frac 1 \pi$ and $C(2) = \frac 1{2\pi}$.
We shall use this explicit solution to validate our numerical scheme. Technically, scheme \eqref{eq:scheme1D} is not valid for $\alpha = 1$; however, we can set $\alpha=1+\varepsilon$ and perform the comparison regardless. In practice, we choose $\varepsilon = 10^{-11}$.

\Cref{Fig1} shows a comparison of the numerical solution ($\alpha=1+\varepsilon$, $R=100$, $\Dx=0.1$, $\Dt=0.1$) on $\Omega=(-R,R)$ and the restriction of $\phi$ to $\Omega$. The initial datum is taken as $\phi(\Delta t, x)$. Both solutions match well on the interior of the domain; however there is a clear discrepancy on the boundary, where the numerical solution behaves singularly. The discrepancy is explained by the fact that the self-similar profile $\phi$ is \textit{leptokurtic} (i.e. has higher kurtosis, or thicker tails, than a Gaussian); therefore, the amount of mass that is ignored by considering $\phi$ on a bounded domain is never exponentially small. The singular behaviour at the boundary is a known effect of certain fractional operators \cite{abatangelo2023singular}. This effect is explored further in the next experiment.

\begin{figure}
	\centering

	\begin{subfigure}{0.49\textwidth}
		\centering
		\includegraphics[width=\textwidth]{./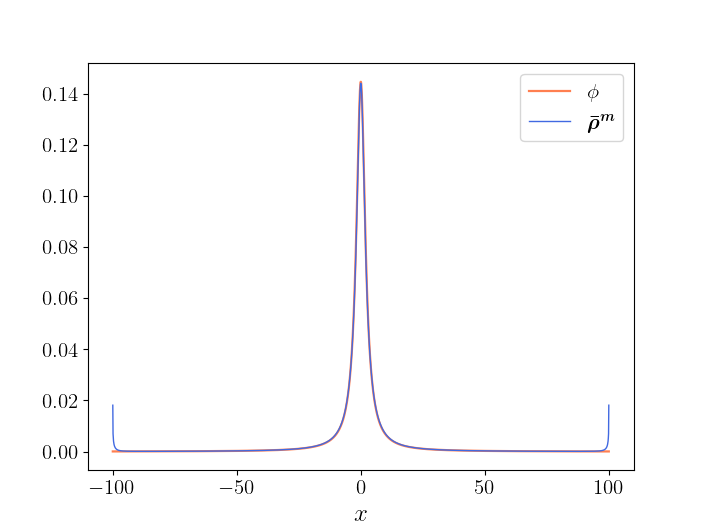}
		\caption{$t=2$}
	\end{subfigure}
	\hfill
	\begin{subfigure}{0.49\textwidth}
		\centering
		\includegraphics[width=\textwidth]{./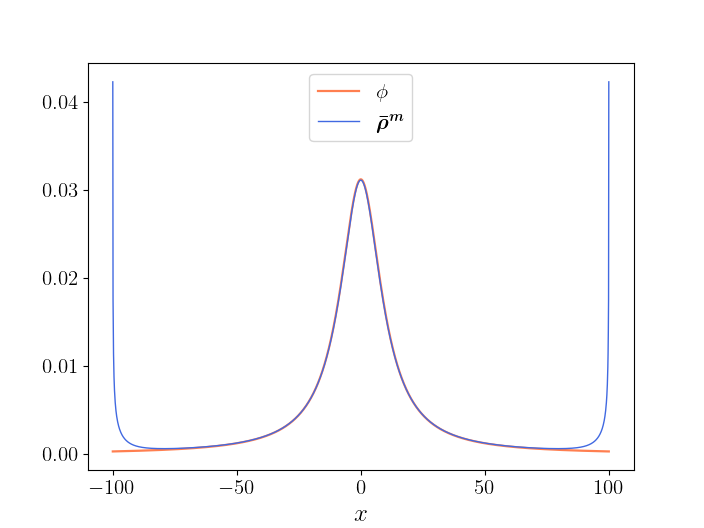}
		\caption{$t=10$}
	\end{subfigure}

	\begin{subfigure}{0.49\textwidth}
		\centering
		\includegraphics[width=\textwidth]{./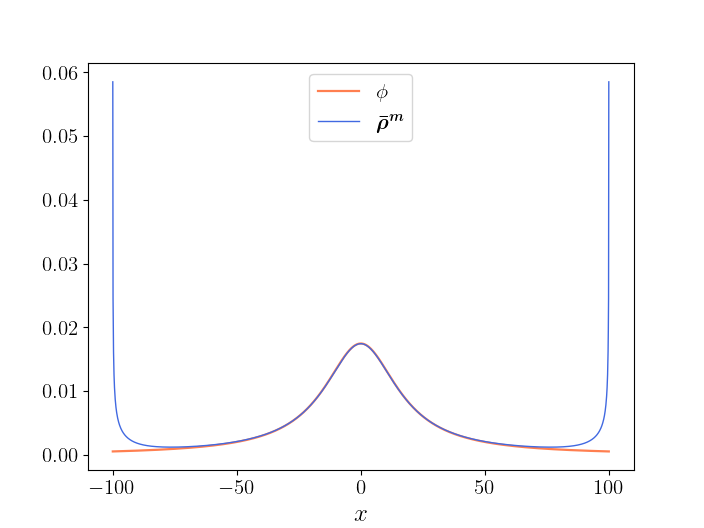}
		\caption{$t=18$}
	\end{subfigure}
	\hfill
	\begin{subfigure}{0.49\textwidth}
		\centering
		\includegraphics[width=\textwidth]{./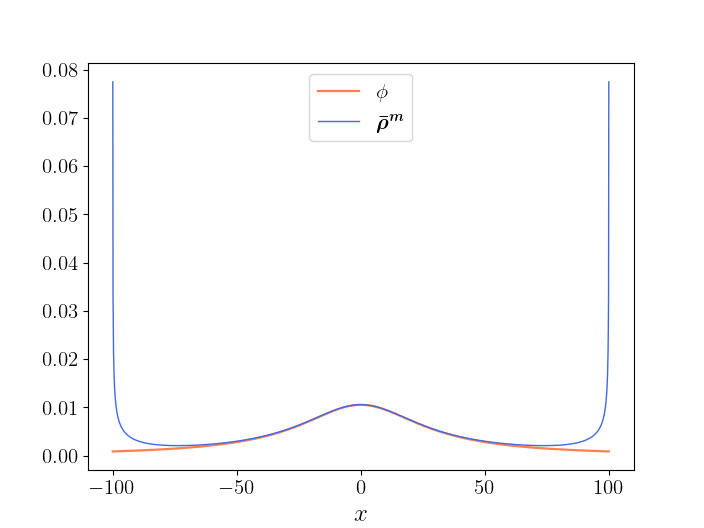}
		\caption{$t=30$}
	\end{subfigure}

	\caption{Fractional heat equation \eqref{eq:fracHE} in one dimension. Numerical solution $\bbrho\m$ on $\Omega=(-R,R)$ and explicit solution $\phi$ on $\R$. Scheme \eqref{eq:scheme1D}, $\alpha = 1 + \varepsilon$, $R=100$, $\Dx=0.1$, $\Dt=0.1$. Good agreement is shown on the interior of the domain; boundary effects are visible.}
	\label{Fig1}
\end{figure}

\subsubsection{Singular behaviour at the boundary}
\label{sec:boundary}

We consider here the steady states of the fractional heat equation \eqref{eq:fracHE} on a bounded domain in order to explore the singular behaviour at the boundary. In one dimension, the steady state $\rho\st$ of \eqref{eq:BcFP} with $\beta=0$ satisfies
\begin{align}
	\pxx\prt*{\int_{-R}^{R} \frac{\rho\st(y)}{|x-y|^{\alpha -1}} \dy} = 0,
\end{align}
which, upon considering the boundary conditions, reduces to
\begin{equation}\label{SingIntEq}
	\int_{-R}^{R} \frac{\rho\st(y)}{|x-y|^{\alpha -1}} \dy = C
\end{equation}
for some constant $C$. For $\alpha>1$, the steady profile can be found explicitly:
\begin{align} \label{StSt}
	\rho\st(x)
	 & =
	\cos^{2} \prt*{\frac{\pi (\alpha - 1)}{2}}
	\frac{
	C(R+x)^{\frac{\alpha}{2} - 1}
	}{
	\pi^{2}(R-x)^{1 - \frac{\alpha}{2}}
	}
	\int_{-R}^{R}
	\frac{
		(R-y)^{1-\frac{\alpha}{2}}
		(R+y)^{\frac{\alpha}{2} - 1}
	}{
		y-x
	}
	\dy
	\\&\quad
	+
	\frac{
		A\sin\prt*{\pi (\alpha - 1)}
	}{
		2 \pi(R+x)^{2 - \alpha}
	};
\end{align}
see \cite{EK2000} for details.

\Cref{Fig7} shows the numerical solution ($\alpha=1.5$, $R=50$, $\Dx=0.1$, $\Dt=0.5$) on $\Omega=(-R,R)$ as it tends to the stationary profile \eqref{StSt}. The datum is taken as in the previous section. The explicit steady state is captured by the numerical solution as time grows. Note, however, that we have to run the simulation for a long time before the match is apparent; this is in contrast to the experiment in the next section. The slow convergence may be due to the singular behaviour at the boundary.

\begin{figure}
	\centering

	\begin{subfigure}{0.49\textwidth}
		\centering
		\includegraphics[width=\textwidth]{./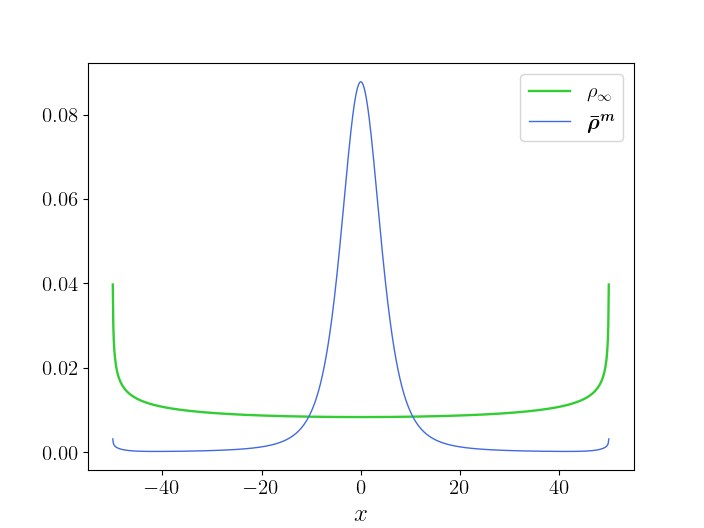}
		\caption{$t=5$}
	\end{subfigure}
	\hfill
	\begin{subfigure}{0.49\textwidth}
		\centering
		\includegraphics[width=\textwidth]{./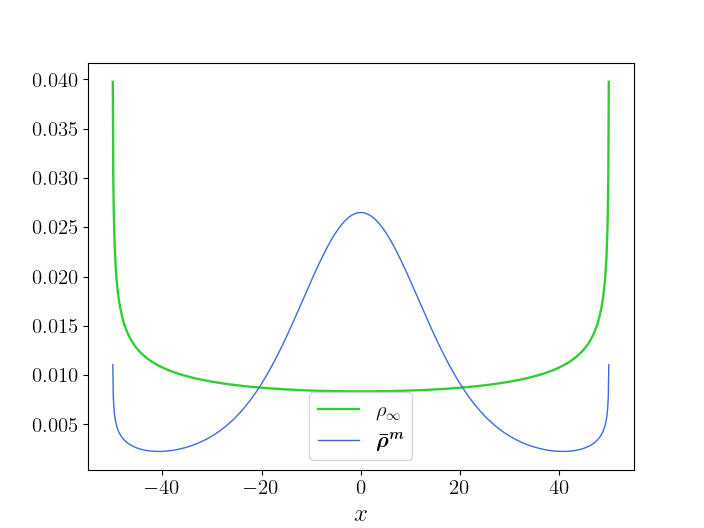}
		\caption{$t=35$}
	\end{subfigure}

	\begin{subfigure}{0.49\textwidth}
		\centering
		\includegraphics[width=\textwidth]{./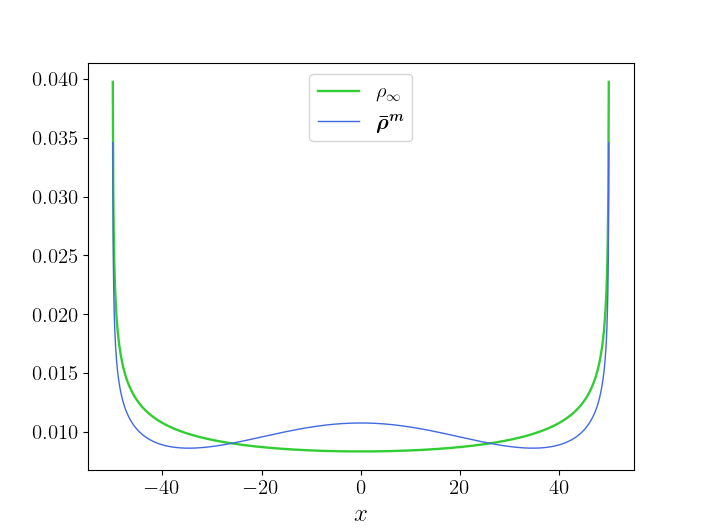}
		\caption{$t=150$}
	\end{subfigure}
	\hfill
	\begin{subfigure}{0.49\textwidth}
		\centering
		\includegraphics[width=\textwidth]{./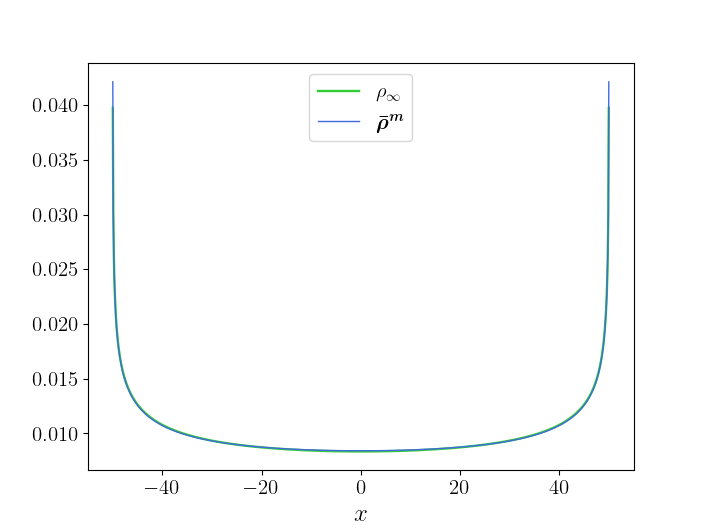}
		\caption{$t=400$}
	\end{subfigure}

	\caption{Fractional heat equation \eqref{eq:fracHE} in one dimension. Numerical solution $\bbrho\m$ and explicit steady state $\rho\st$ on $\Omega=(-R,R)$. Scheme \eqref{eq:scheme1D}, $\alpha = 1.5$, $R=50$, $\Dx=0.1$, $\Dt=0.5$. The numerical solution tends to $\rho\st$.}
	\label{Fig7}
\end{figure}

\subsubsection{Steady states as a function of domain size}\label{sec:steady1D}

We now turn to the L\'{e}vy-Fokker-Planck equation \eqref{eq:fracFP}. First we consider the case $\alpha=1$, where an explicit solution on the whole line is known:
\begin{equation}\label{explicit1D}
	\rho^{\ast}(t,x)
	=
	\frac{1}{\pi}
	\frac{
	e^{t}(e^{t}-1)
	}{
	(1+x^{2})e^{2t} - 2e^{t} + 1
	};
\end{equation}
as $t\rightarrow\infty$, this solution tends to the steady state
\begin{equation}\label{explicitSS1D}
	\rho_{\infty}(x)
	=
	\frac{1}{\pi}
	\frac{1}{1+x^{2}}.
\end{equation}

\Cref{Fig8} shows the the numerical solution ($\alpha=1+\varepsilon$, $R=50$, $\Dx=0.05$, $\Dt=0.01$) on $\Omega=(-R,R)$ compared to the explicit steady state \eqref{explicitSS1D}. The datum for the numerical solution is a uniform distribution with unit mass. Once again, the explicit steady state is captured well by the numerical solution as time grows. Unlike in the previous experiment, this solution approaches the corresponding steady state very rapidly.

\begin{figure}
	\centering

	\begin{subfigure}{0.49\textwidth}
		\includegraphics[width=\textwidth]{./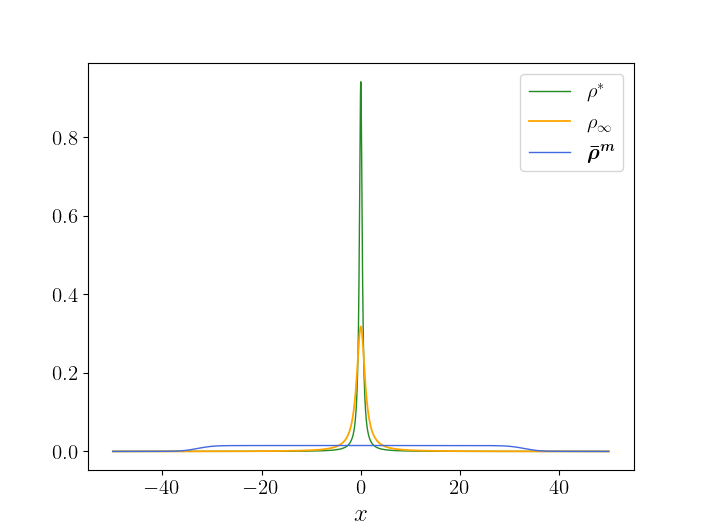}
		\caption{$t=0.4$}
	\end{subfigure}
	\hfill
	\begin{subfigure}{0.49\textwidth}
		\includegraphics[width=\textwidth]{./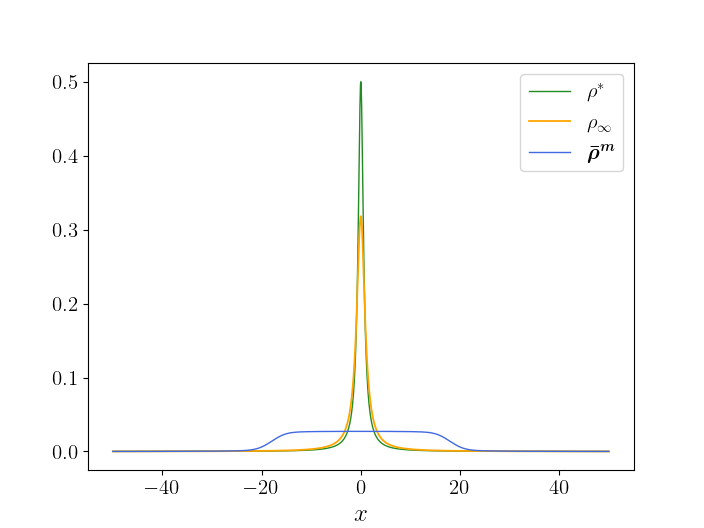}
		\caption{$t=1.0$}
	\end{subfigure}

	\begin{subfigure}{0.49\textwidth}
		\includegraphics[width=\textwidth]{./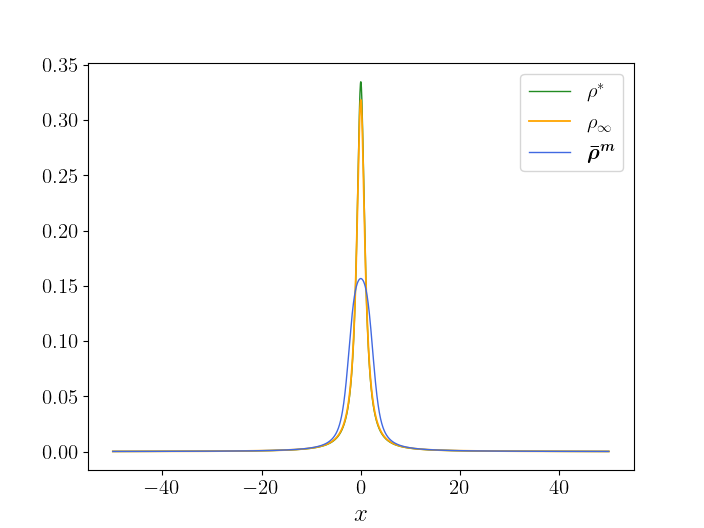}
		\caption{$t=3.0$}
	\end{subfigure}
	\hfill
	\begin{subfigure}{0.49\textwidth}
		\includegraphics[width=\textwidth]{./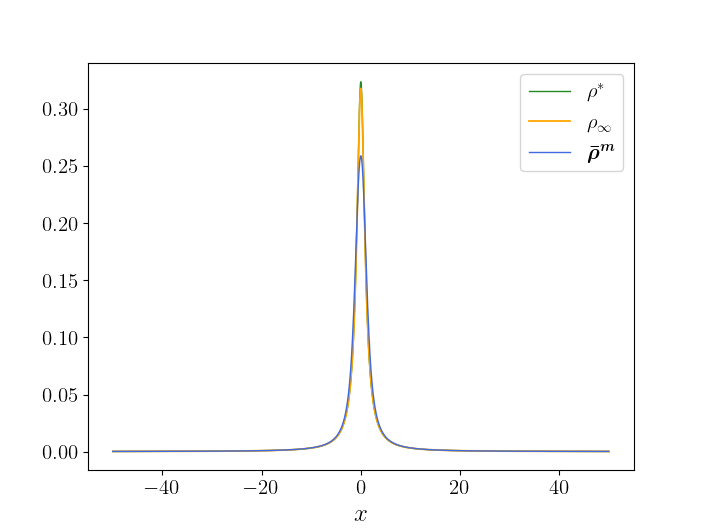}
		\caption{$t=4.0$}
	\end{subfigure}

	\begin{subfigure}{0.7\textwidth}
		\centering
		\includegraphics[width=\textwidth]{./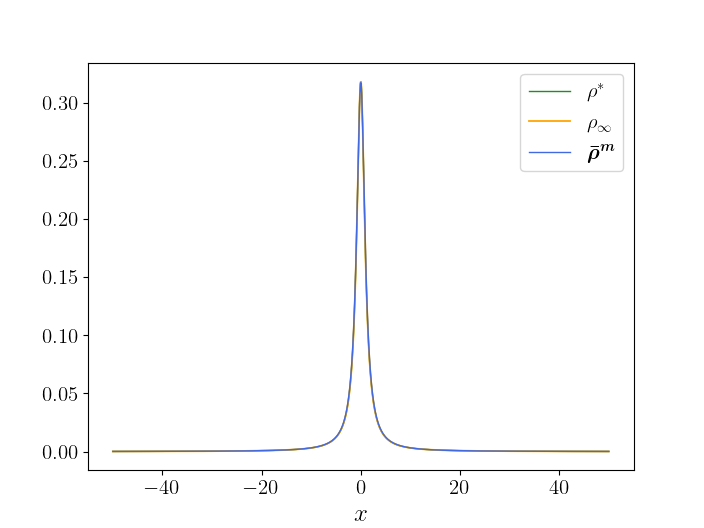}
		\caption{$t=10.0$}
	\end{subfigure}

	\caption{L\'{e}vy-Fokker-Planck equation \eqref{eq:fracFP} in one dimension. Numerical solution $\bbrho\m$, exact solution $\rho^*$, and explicit steady state $\rho\st$. Scheme \eqref{eq:scheme1D}, $\alpha = 1+\varepsilon$, $R=50$, $\Dx=0.05$, $\Dt=0.01$. The numerical solution clearly tends to $\rho\st$.}
	\label{Fig8}
\end{figure}

As was the case with the fractional heat equation, the typical solution of the L\'{e}vy-Fokker-Planck equation is \textit{leptokurtic}, as it has algebraic tails. Thus, the error committed when a whole-space solution is restricted to a bounded domain is not exponentially small, even if the presence of the Fokker-Planck term prevents singularities from developing at the boundary. We therefore expect that the steady state in a bounded domain will differ from \eqref{explicitSS1D} by a non-trivial amount.

\Cref{Fig12} shows the $\Lone\prt{\Omega}$ distance between the numerical steady state of the L\'{e}vy-Fokker-Planck equation ($\alpha=1+\varepsilon$, $\Dx=2R/2^{12}$, $\Dt=0.1$) on $\Omega=\prt{-R,R}$ for various values of $R$, and the explicit steady state \eqref{explicitSS1D}. As expected, the error decreases as $R$ tends to infinity, though the decay does not follow an obvious pattern.

We show the relative entropy of our numerical solution (for $\alpha = 1+\varepsilon$) with respect to the equilibrium $\rho_{\infty}$ in \Cref{Fig16}. The results show good agreement with the exponential trend predicted by \cite{gentil2008levy}, using two most common entropy functions: $\Phi(x) = (x -1)^{2}$ and $\Phi(x) = x(\log x - 1) + 1$.

\begin{figure}
	\centering
	\includegraphics[width=0.9\textwidth]{./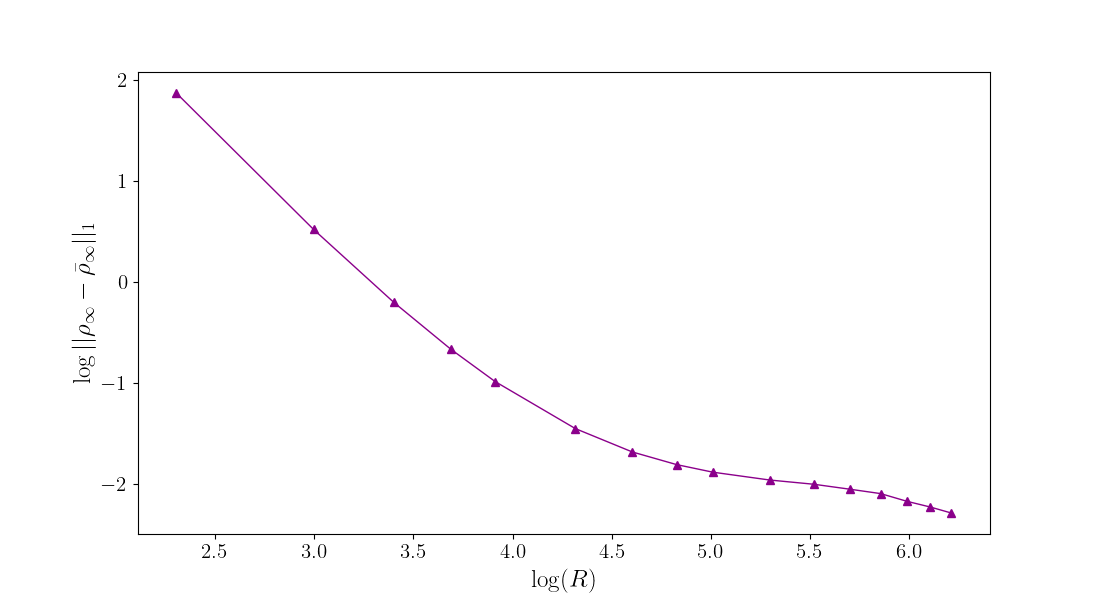}
	\caption{L\'{e}vy-Fokker-Planck equation \eqref{eq:fracFP} in one dimension. $\Lone\prt{\Omega}$ distance between the numerical steady state with $\alpha=1+\varepsilon$ on $\Omega=\prt{-R,R}$ and the explicit steady state with $\alpha=1$. Scheme \eqref{eq:scheme1D}, $\Dx=2R/2^{12}$, $\Dt=0.1$. The mismatch decreases as $R$ increases.}
	\label{Fig12}
\end{figure}

A similar analysis can be performed when $\alpha/2=1$. The L\'{e}vy-Fokker-Planck equation reduces to the classical Fokker-Planck equation
\begin{align}
	\pt\rho = \laplace\rho + \div\prt{x\rho},
\end{align}
whose unique, asymptotically stable steady state is
\begin{align}\label{eq:FokkerPlanckSS}
	\rho^{\infty}(x) = \frac{1}{(2 \pi)^{\frac{d}{2}}} e^{-\frac{|x|^{2}}{2}}
\end{align}
in dimension $d$ (for solutions with unit mass on the whole space). If we let $\frac{\alpha}{2} = 0.99$, the steady state of our numerical scheme should be close to this one, and the agreement should improve as the domain grows.

\Cref{Fig15} shows the $\Lone\prt{\Omega}$ distance between the numerical steady state of the L\'{e}vy-Fokker-Planck equation ($\alpha/2=0.99$, $\Dx=2R/2^{12}$, $\Dt=0.1$) on $\Omega=\prt{-R,R}$ for various values of $R$ and the explicit steady state \eqref{eq:FokkerPlanckSS}. Once again, the error decreases as $R$ tends to infinity, as expected.

\Cref{fig:LFP1DSteadyStates} shows the numerical steady states of the L\'{e}vy-Fokker-Planck equation \eqref{eq:fracFP} ($R=50$, $\Delta x = 0.05$, $\Delta t = 0.01$) on $\Omega=(-R,R)$ for various fractional orders $\alpha \in (1,2)$. We recover symmetric distributions with algebraic tails that become thicker as $\alpha$ decreases. We compare the tails of our numerical results with the expected behaviour predicted in \cite{blumenthal1960TheoremsStableProcesses}, given by $\rho(t,x) \asymp \min\{ 1 , |x|^{-\alpha-d} \}$.

\begin{figure}
	\centering
	\includegraphics[width=0.9\textwidth]{./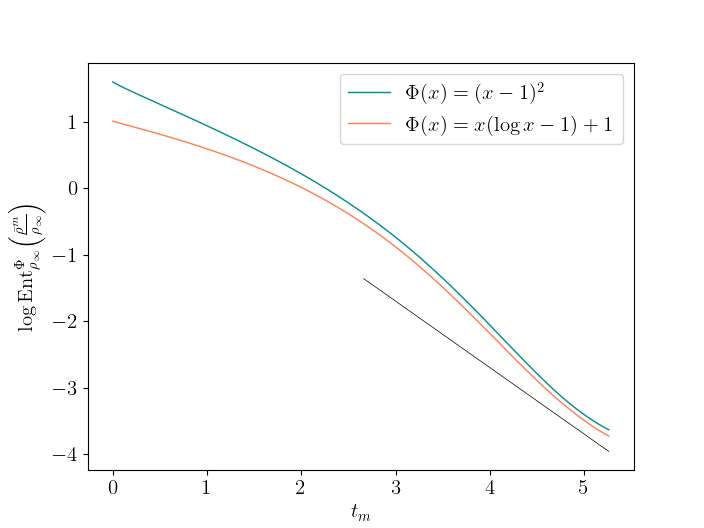}
	\caption{L\'{e}vy-Fokker-Planck equation \eqref{eq:fracFP} in one dimension. Dissipation of the relative entropy with $\alpha=1 + \varepsilon$ with respect to the equilibrium $\rho_{\infty}$. Scheme \eqref{eq:scheme1D}, $R=50$, $\Dx=0.05$, $\Dt=0.01$. Black reference line has slope one. }
	\label{Fig16}
\end{figure}

\begin{figure}
	\centering
	\includegraphics[width=0.9\textwidth]{./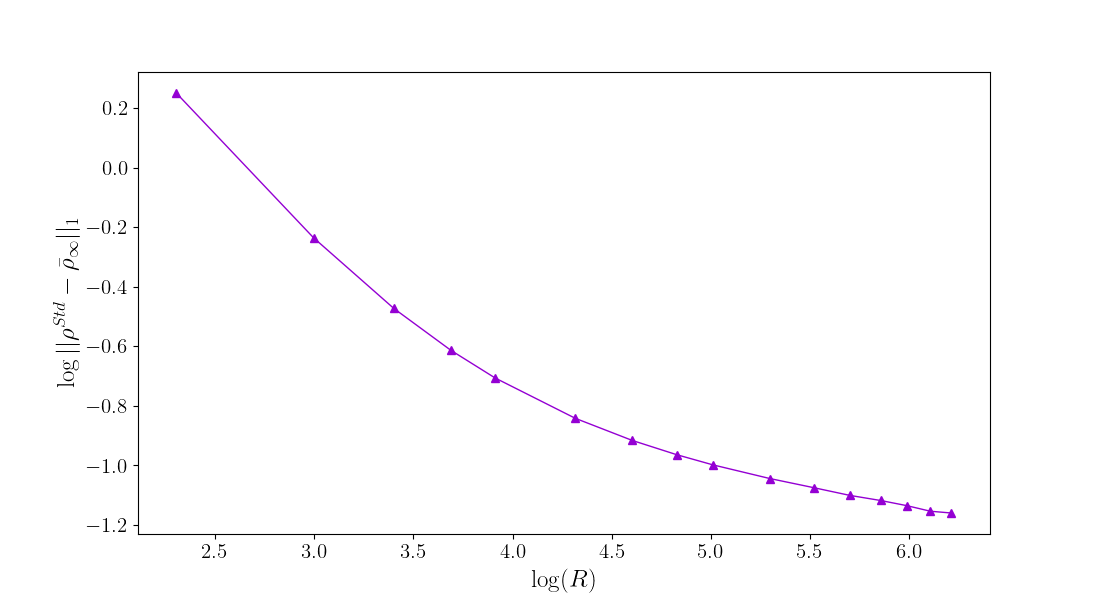}
	\caption{L\'{e}vy-Fokker-Planck equation \eqref{eq:fracFP} in one dimension. $\Lone\prt{\Omega}$ distance between the numerical steady state with $\alpha/2=0.99$ on $\Omega=\prt{-R,R}$ and the explicit steady state with $\alpha=2$. Scheme \eqref{eq:scheme1D}, $\Dx=2R/2^{12}$, $\Dt=0.1$. The distance decreases as $R$ increases.}
	\label{Fig15}
\end{figure}

\begin{figure}
	\centering
	\includegraphics[width=0.9\textwidth]{./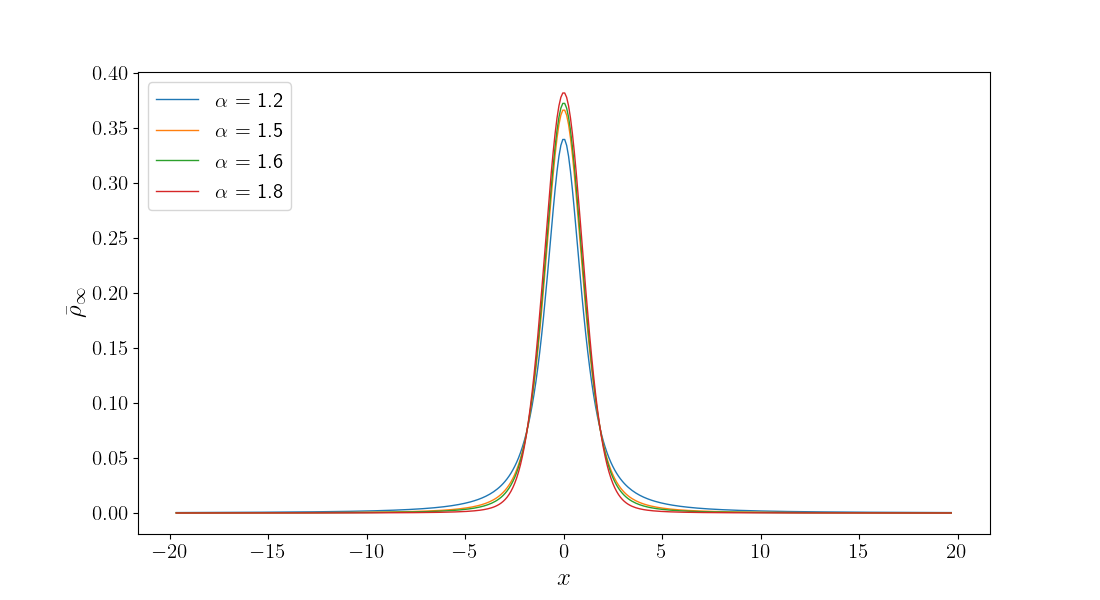}
	\includegraphics[width = 0.9\textwidth]{./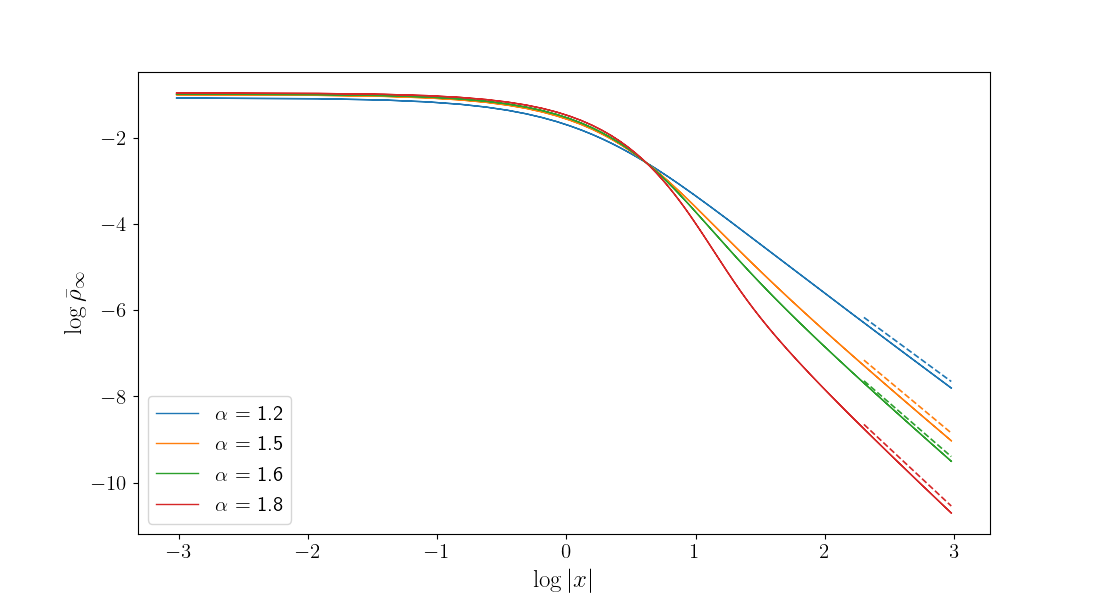}
	\caption{L\'{e}vy-Fokker-Planck equation \eqref{eq:fracFP} in one dimension. Numerical steady state for varying fractional order $\alpha\in(1,2)$. Scheme \eqref{eq:scheme1D}, $R=50$, $\Dx=0.05$, $\Dt=0.01$. \textbf{Top:} profile at the centre of the domain. \textbf{Bottom:} detail of the algebraic tails, compared with the predicted asymptotic behaviour $|x|^{-\alpha-d}$ (dashed).}
	\label{fig:LFP1DSteadyStates}
\end{figure}

\subsubsection{Convergence of steady states}

To conclude, we verify the order of convergence of the scheme. We fix the domain and compute the steady state of the L\'{e}vy-Fokker-Planck equation \eqref{eq:fracFP} as in the previous section, for various values of $\alpha$. We compute the steady states on a sequence of refining meshes, and study their convergence. Since the analytical steady state is not known explicitly, we shall monitor the error between numerical steady states, and show that this decays with the mesh size.

\Cref{Fig2} shows the $\Lone\prt{\Omega}$ and $\Ltwo\prt{\Omega}$ distance between successive numerical steady states ($R=50$, $\Dt=\Dx=2R/ 2^{n}$ for $5\leq n\leq 10$) computed with scheme \eqref{eq:scheme1D} on $\Omega=(-R,R)$. The scheme is first-order accurate for all fractional orders $\alpha\in\prt{1,2}$.

\begin{figure}
	\centering

	\begin{subfigure}{0.49\textwidth}			\includegraphics[width=\textwidth]{./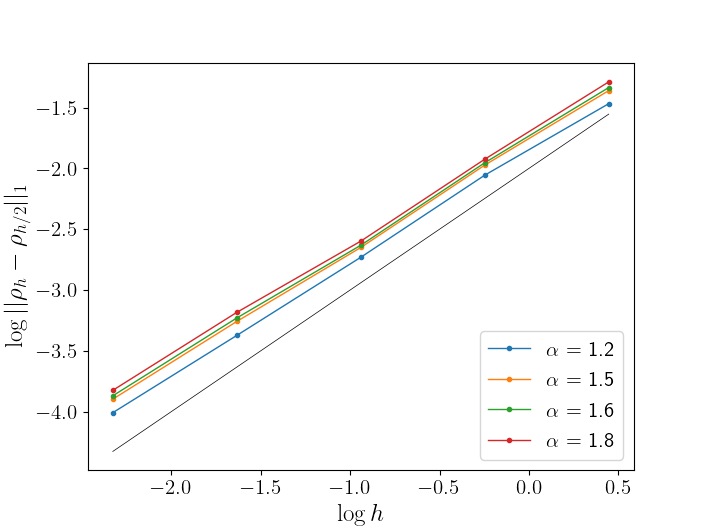}
		\caption{$L^{1}(\Omega)$ norm}
	\end{subfigure}
	\hfill
	\begin{subfigure}{0.49\textwidth}
		\includegraphics[width=\textwidth]{./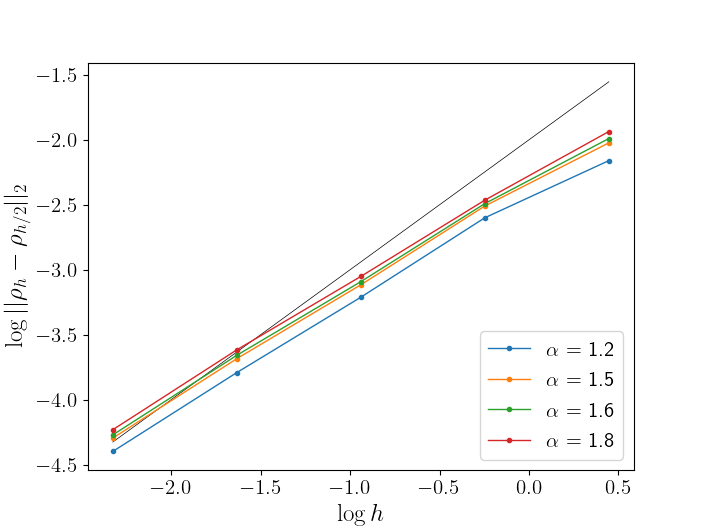}
		\caption{$L^{2}(\Omega)$ norm}
	\end{subfigure}

	\caption{L\'{e}vy-Fokker-Planck equation \eqref{eq:fracFP} in one dimension. Convergence of scheme \eqref{eq:scheme1D} for varying fractional order $\alpha$. $R=50$, $\Dx=2R/2^{n}$ for $5\leq n\leq 10$, $\Dt = \Dx$. Black reference line has slope one.}
	\label{Fig2}
\end{figure}

\Cref{Fig3} performs the same analysis on scheme \eqref{eq:scheme1D} with the second-order flux \eqref{eq:minmodFlux} (viz. \cref{rm:higherOrder}), letting $\Dt = \Dx^2$ instead. The scheme is second-order accurate for all fractional orders $\alpha\in\prt{1,2}$.

\begin{figure}
	\centering

	\begin{subfigure}{0.49\textwidth}
		\includegraphics[width=\textwidth]{./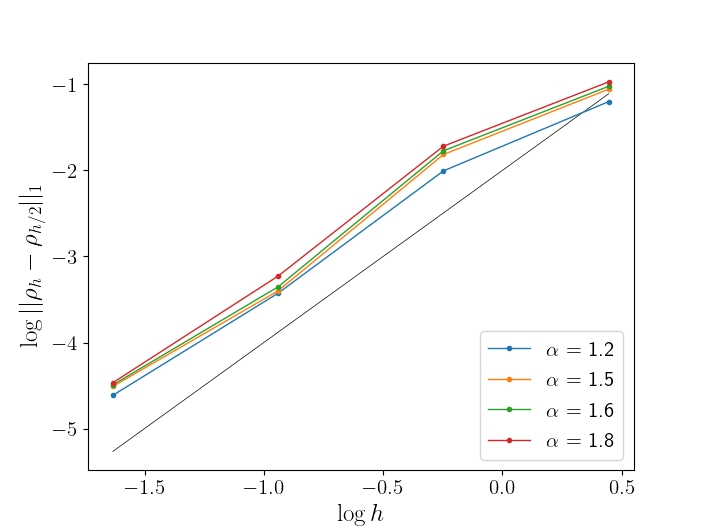}
		\caption{$L^{1}(\Omega)$ norm}
	\end{subfigure}
	\hfill
	\begin{subfigure}{0.49\textwidth}
		\includegraphics[width=\textwidth]{./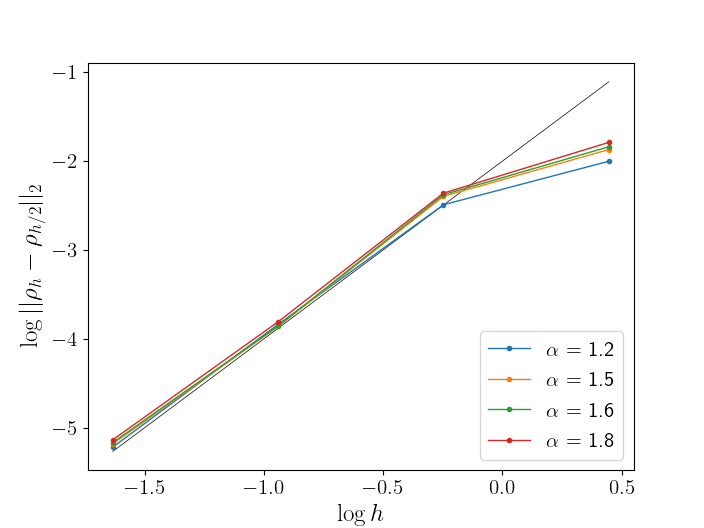}
		\caption{$L^{2}(\Omega)$ norm}
	\end{subfigure}

	\caption{L\'{e}vy-Fokker-Planck equation \eqref{eq:fracFP} in one dimension. Convergence of scheme \eqref{eq:scheme1D} with second order flux \eqref{eq:minmodFlux} (viz. \cref{rm:higherOrder}) for varying fractional order $\alpha$. $R=50$, $\Dx=2R/2^{n}$ for$5\leq n\leq 10$, $\Dt=\Dx$. Black reference line has slope two.}
	\label{Fig3}
\end{figure}
 \subsection{Two dimensions}

\subsubsection{Steady states as a function of domain size}\label{sec:steady2D}

We begin our two-dimensional experiments by verifying the behaviour of the dimensionally split scheme. \Cref{fig:LFP2DSteadyStates} shows the numerical steady states of the L\'{e}vy-Fokker-Planck equation \eqref{eq:fracFP} ($R=20$, $\Delta x = \Delta y = 0.15$, $\Delta t = 0.2$) on $\Omega=(-R,R)^2$ for various fractional orders. We recover radially symmetric distributions with algebraic tails that become thicker as $\alpha$ decreases. We compare the tails of our numerical results with the expected behaviour predicted in \cite{blumenthal1960TheoremsStableProcesses}, which is given by
$\rho(t,x) \asymp \min\{ 1 , |x|^{-\alpha-d} \}$.

\begin{figure}
	\centering
	\includegraphics[width=0.9\textwidth]{./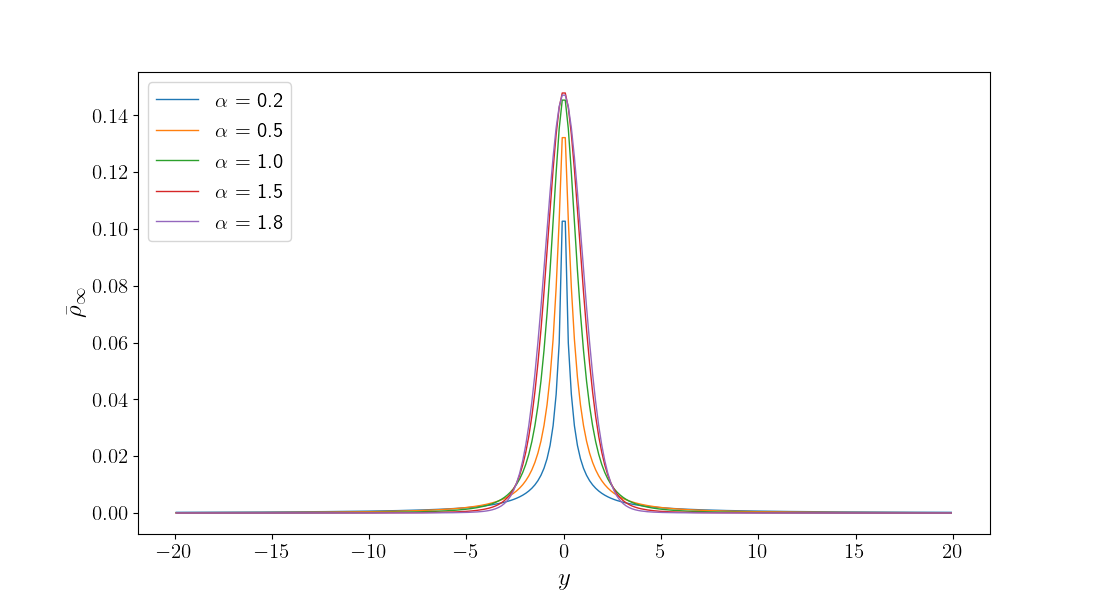}
	\includegraphics[width = 0.9\textwidth]{./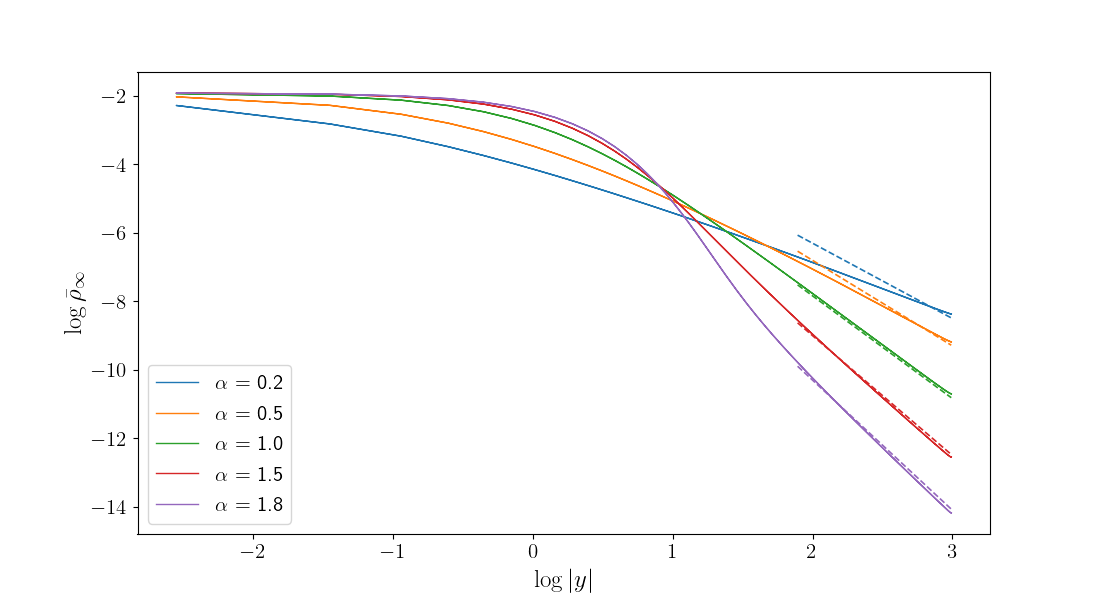}
	\caption{L\'{e}vy-Fokker-Planck equation \eqref{eq:fracFP} in two dimensions. Numerical steady state for varying fractional order $\alpha\in(0,2)$. Scheme \eqref{eq:scheme2D} with splitting (viz. \cref{sec:splitting}), $R=20$, $\Dx=\Dy=0.15$, $\Dt=0.2$. \textbf{Top:} central section. \textbf{Bottom:} detail of the algebraic tails, compared with the dotted lines of the predicted long time asymptotic behaviour $|x|^{-\alpha-d}$.}
	\label{fig:LFP2DSteadyStates}
\end{figure}

As in the one-dimensional case, an explicit solution to the L\'{e}vy-Fokker-Planck equation on the whole space is known for $\alpha=1$. The solution is found from the self-similar solution to the fractional heat equation \cite{de2022stationary} through the change of variables proposed in \cite{biler2003generalised}, just as a solution to the classical Fokker-Planck equation can be derived from a solution to the heat equation. The solution in question is given by
\begin{equation}\label{explicit2D}
	\rho^{\ast}(t, x, y)
	=
	\frac{1}{2\pi}
	\frac{
		e^{2t}(e^{t}-1)
	}{
		\prt*{(1+x^2+y^2)e^{2t} - 2e^{t} + 1}^{\frac{3}{2}}
	},
\end{equation}
which tends to the steady state
\begin{equation}\label{explicitSS2D}
	\rho_{\infty}(x, y)
	=
	\frac{1}{2\pi}
	\frac{1}{\prt*{1+x^2+y^2}^{\frac{3}{2}}}.
\end{equation}

\Cref{Fig9} shows the the numerical solution ($\alpha=1$, $R=20$, $\Dx=\Dy=0.08$, $\Dt=0.1$) on $\Omega=(-R,R)^2$ compared to the explicit steady state \eqref{explicitSS2D}. The datum for the numerical solution is a uniform distribution with unit mass.

\begin{figure}
	\centering

	\begin{subfigure}{0.49\textwidth}
		\includegraphics[
			trim=100pt 30pt 70pt 60pt, clip,
			width=\textwidth]{./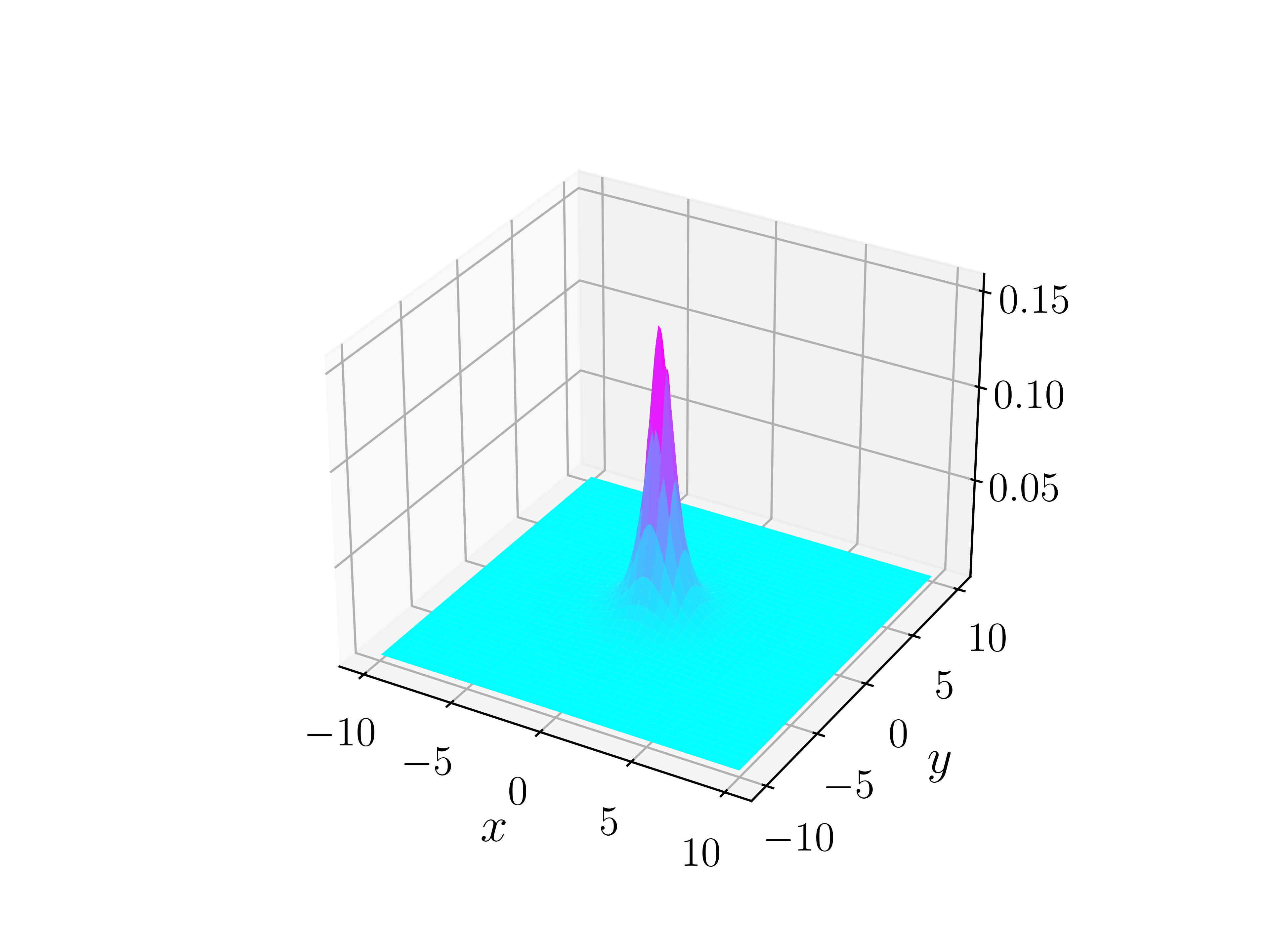}
		\caption{Numerical steady state}
	\end{subfigure}
	\hfill
	\begin{subfigure}{0.49\textwidth}
		\includegraphics[width=\textwidth]{./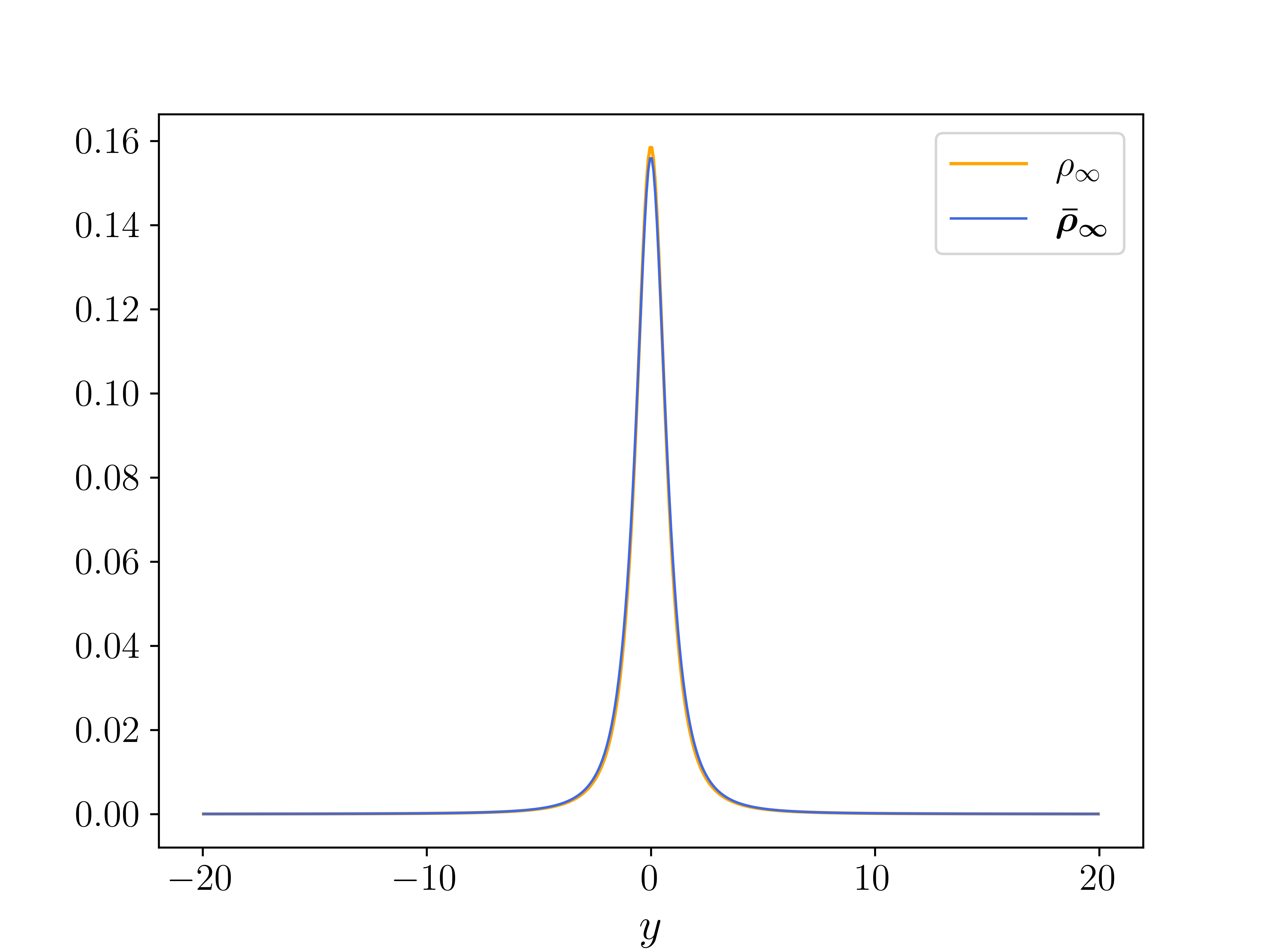}
		\caption{Central section}
	\end{subfigure}

	\caption{L\'{e}vy-Fokker-Planck equation \eqref{eq:fracFP} in two dimensions. Numerical steady state and explicit steady state. Scheme \eqref{eq:scheme2D} with splitting (viz. \cref{sec:splitting}), $\alpha = 1+\varepsilon$, $R=20$, $\Dx=\Dy=0.08$, $\Dt=0.1$.}
	\label{Fig9}
\end{figure}

We now study the convergence of the numerical steady state to the profile \eqref{explicitSS2D} as the size of the domain grows. Unlike the one-dimensional test, the two-dimensional analysis can be performed setting $\alpha=1$ exactly. \Cref{Fig10} shows the $\Lone\prt{\Omega}$ distance between the numerical steady state of the L\'{e}vy-Fokker-Planck equation ($\alpha=1$, $\Dx=\Dy=2R/2^{8}$, $\Dt=0.1$) on $\Omega=\prt{-R,R}^2$, for various values of $R$, and the explicit steady state \eqref{explicitSS2D}. As in the one-dimensional case, the error decreases as $R$ tends to infinity.

\begin{figure}
	\centering
	\includegraphics[width=0.9\textwidth]{./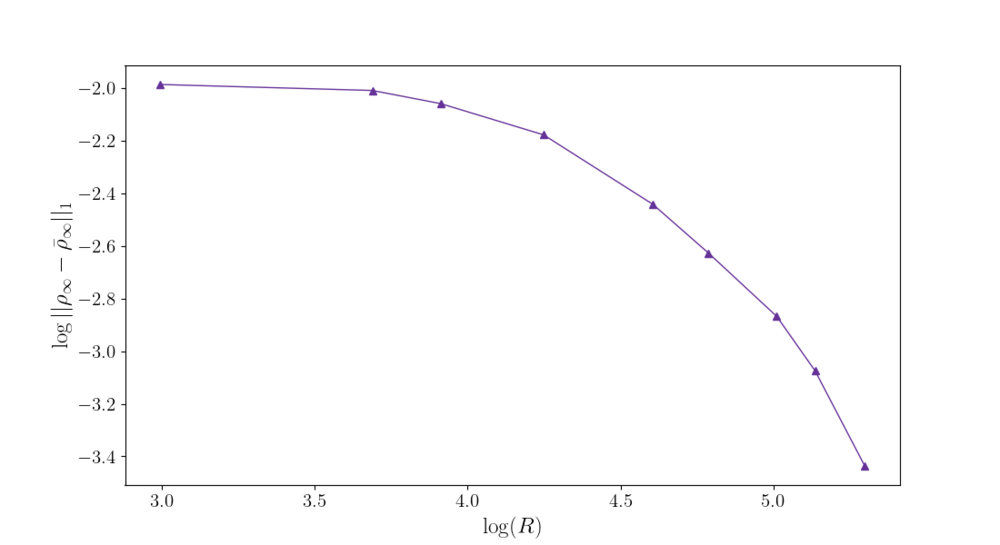}
	\caption{L\'{e}vy-Fokker-Planck equation \eqref{eq:fracFP} in two dimensions. $\Lone\prt{\Omega}$ distance between the numerical steady state on $\Omega=\prt{-R,R}^2$ and the explicit steady state. Scheme \eqref{eq:scheme2D} with splitting (viz. \cref{sec:splitting}), $\alpha=1$, $\Dx=2R/2^{8}$, $\Dt=0.1$. The mismatch decreases as $R$ increases.}
	\label{Fig10}
\end{figure}

\subsubsection{Convergence of steady states}

We verify the order of convergence of the dimensionally split scheme. As in one dimension, we fix the domain size and compute the steady state of the L\'{e}vy-Fokker-Planck equation \eqref{eq:fracFP} for various values of $\alpha$. \Cref{Fig5} shows the $\Lone\prt{\Omega}$ and $\Ltwo\prt{\Omega}$ distance between numerical steady states ($R=20$, $\Dt=\Dx=\Dy=2R/ 2^{n}$ for $5\leq n\leq 8$) computed with scheme \eqref{eq:scheme2D} on $\Omega=(-R,R)^2$ as the mesh size is halved. The order of the scheme appears slightly less than one; this might be a consequence of the dimensional splitting. Noticeably, the convergence is initially very slow when the fractional order is close to zero.

\begin{figure}
	\centering

	\begin{subfigure}{0.49\textwidth}
		\includegraphics[width=\textwidth]{./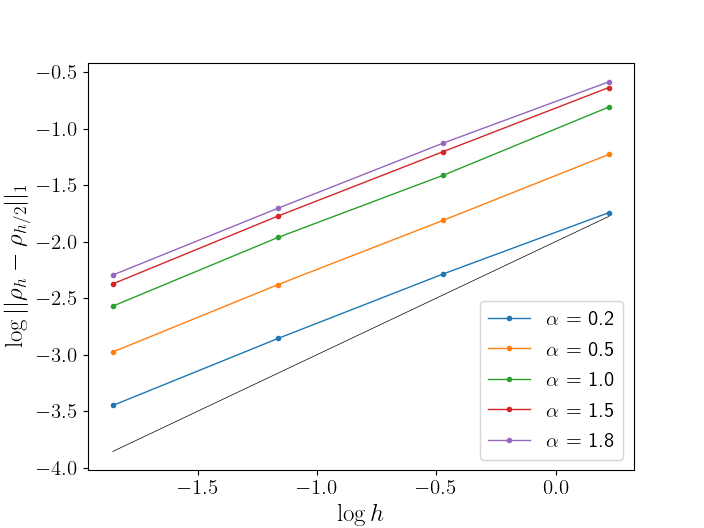}
		\caption{$L^{1}(\Omega)$ norm}
	\end{subfigure}
	\hfill
	\begin{subfigure}{0.49\textwidth}
		\includegraphics[width=\textwidth]{./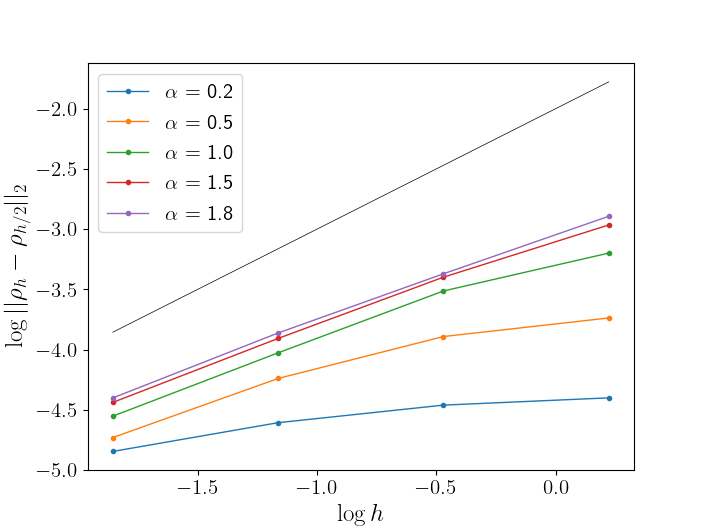}
		\caption{$L^{2}(\Omega)$ norm}
	\end{subfigure}

	\caption{L\'{e}vy-Fokker-Planck equation \eqref{eq:fracFP} in two dimensions. Convergence of scheme \eqref{eq:scheme2D} with splitting (viz. \cref{sec:splitting}) for varying fractional orders $\alpha$. $R=20$, $\Dx=\Dy=2R/2^{n}$ for $n=5, \dots, 8$, $\Dt=\Dx$. Black reference line has slope one.}
	\label{Fig5}
\end{figure}

\subsubsection{Long-time asymptotics}

To conclude, we study the rate of convergence of the numerical solution of the L\'{e}vy-Fokker-Planck equation \eqref{eq:fracFP} to the corresponding steady states. \Cref{Fig6} shows the $\Lone(\Omega)$ and $\Ltwo(\Omega)$ distances of the numerical solution ($R=20$, $\Delta x = \Delta y = 0.15$, $\Delta t = 0.08$) on $\Omega=(-R,R)^2$ for various fractional orders to their asymptotic steady states as a function of time. Perhaps due to the highly symmetric initial data, the numerical solutions show an improved rate of convergence ($e^{-2t}$) towards the steady state with respect to the result of \cite{gentil2008levy} ($e^{-\alpha t}$). This acceleration phenomena due to symmetry of the datum is well-documented, as it has been observed also in the classical Fokker-Planck setting \cite{BARTIER201176}, as well as in the porous medium equation \cite{carrillo2007strict}.

\begin{figure}
	\centering

	\begin{subfigure}{0.49\textwidth}
		\includegraphics[width=\textwidth]{./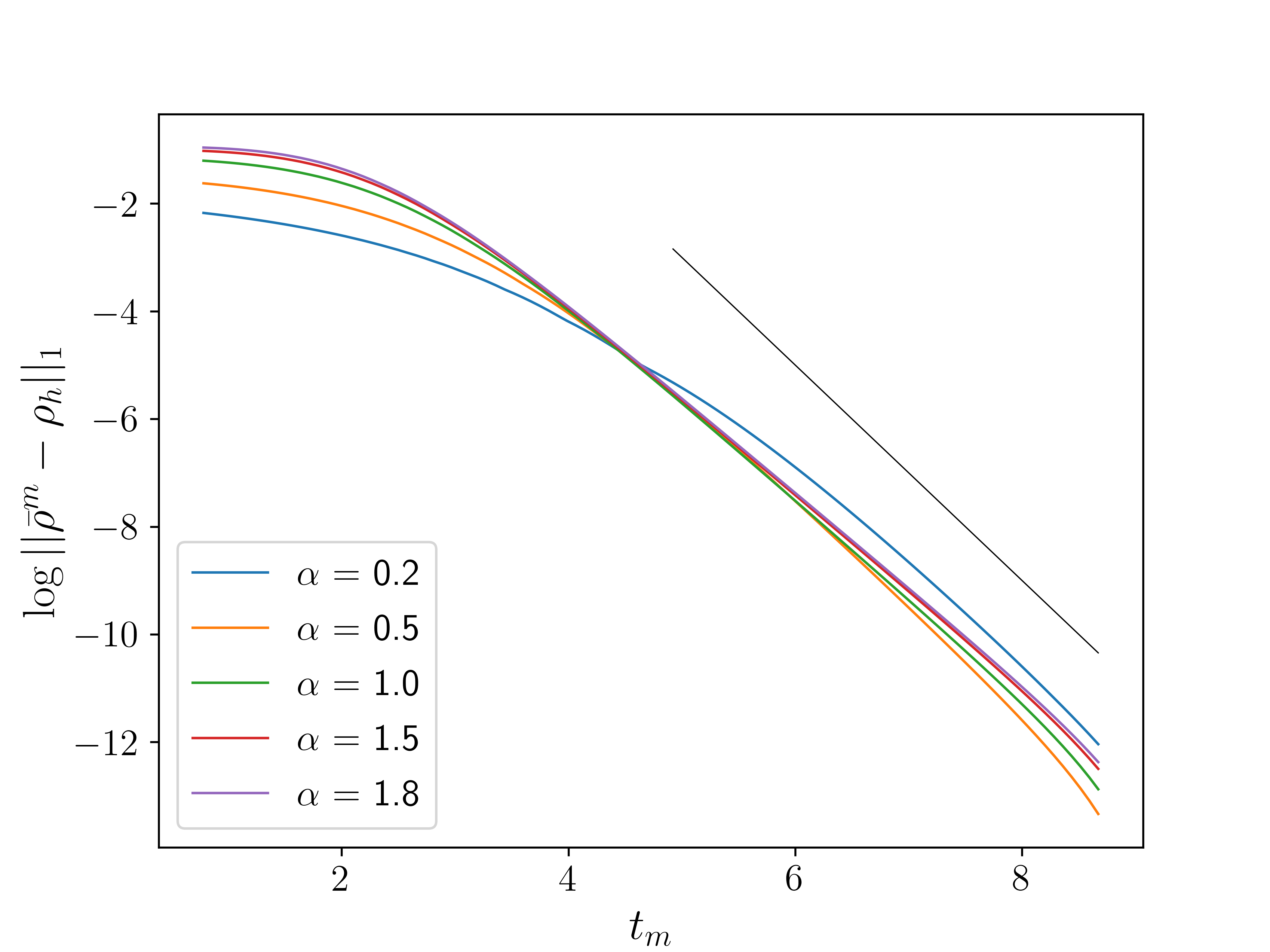}
		\caption{$L^{1}(\Omega)$ norm}
	\end{subfigure}
	\hfill
	\begin{subfigure}{0.49\textwidth}
		\includegraphics[width=\textwidth]{./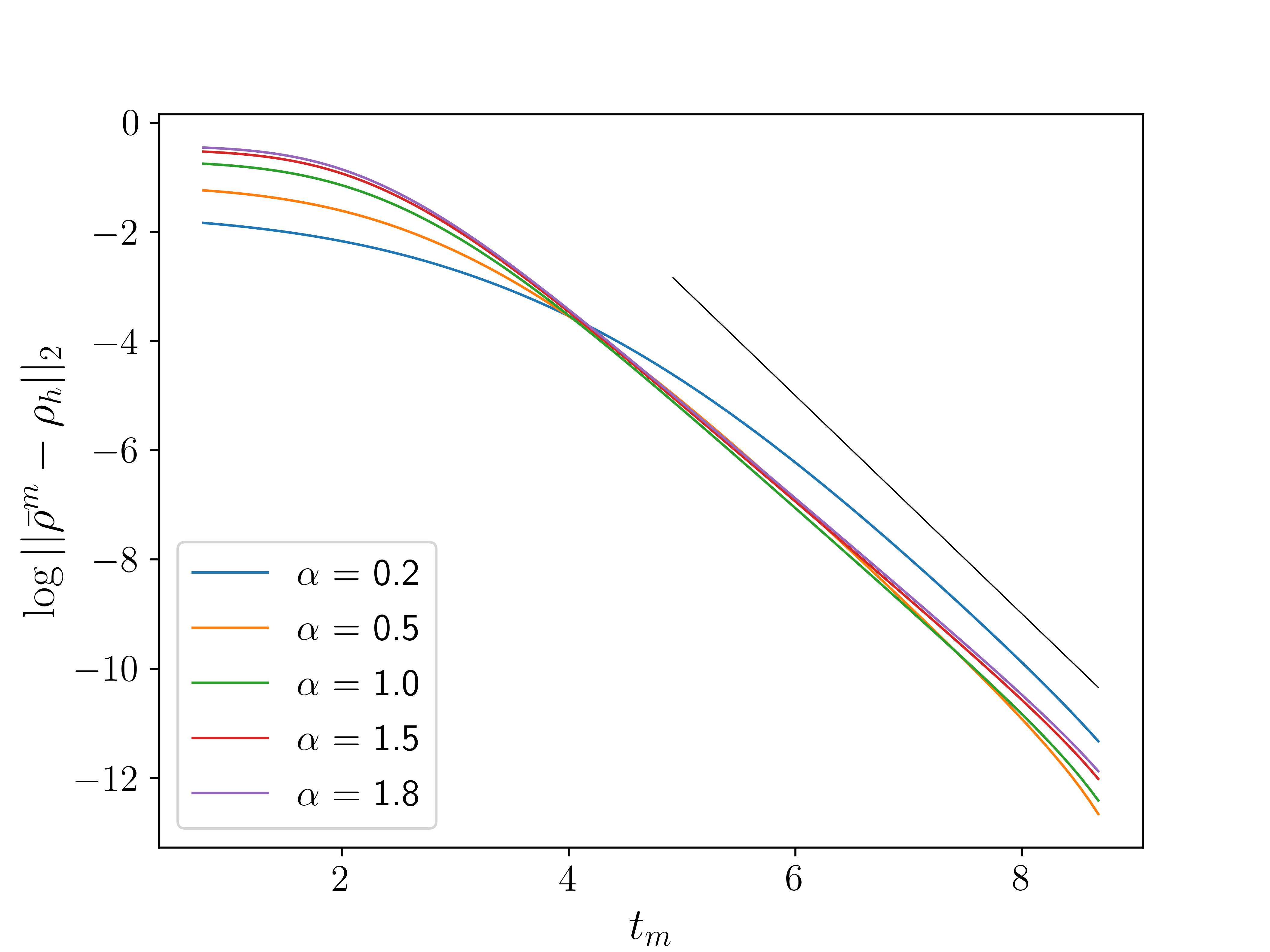}
		\caption{$L^{2}(\Omega)$ norm}
	\end{subfigure}

	\caption{L\'{e}vy-Fokker-Planck equation \eqref{eq:fracFP} in two dimensions. Convergence to steady state of numerical solutions for varying fractional order. Scheme \eqref{eq:scheme2D} with splitting (viz. \cref{sec:splitting}), $R=20$, $\Delta x = \Delta y = 0.15$, $\Delta t = 0.08$. Black reference line has slope two.}
	\label{Fig6}
\end{figure}
  
\section*{Acknowledgements}

This work was supported by the Advanced Grant Nonlocal-CPD (Nonlocal PDEs for Complex Particle Dynamics: Phase Transitions, Patterns and Synchronization) of the European Research Council Executive Agency (ERC) under the European Union’s Horizon 2020 research and innovation programme (grant agreement No. 883363). RB and JAC were also supported by the EPSRC grant numbers EP/T022132/1. JAC was also partially supported by EP/V051121/1. DGC was partially supported by RYC2022-037317-I and PID2021-127105NB-I00 from the Spanish Government. Part of this work was done during the visit of SF as master student from University of Trento by the Erasmus+ programme. 
\renewcommand{\appendixname}{Appendix: A Second Order Discretisation}
\addappendix
\label{sec:appendixMinmod}

The discretisation of the advection terms described in \cref{sec:numericalSchemes} is only accurate to first order. However, the discretisation of the fractional diffusion term is second-order accurate, as discussed in \cref{rm:higherOrder}. In order to verify this, the validation tests of \cref{sec:numericalExperiments} employ a higher order scheme for the advection part. The discretisation of choice is classical: upwind with a \textit{minmod} limiter \cite{LeVeque1990,LeVeque2002}, which has been used successfully for generalised Fokker-Planck equations \cite{CCH2015,BCH2020}. For the sake of self-consistency, we detail here the one-dimensional discretisation.

The definition of the diffusive flux $F\dif\ih(t)$ given in \eqref{eq:1DDiffusionFlux} is not modified. Similarly, the advective velocity $v\ih$ is kept as given in \eqref{eq:1DAdvectionVelocity}. The only alteration takes place in the advective flux $F\ad\ih(t)$; the first-order upwind formula \eqref{eq:1DAdvectionFlux} is replaced by
\begin{align}\label{eq:minmodFlux}
	F\ad\ih(t)
	 & =
	\bar\rho\E\i(t)\pos{v\ih}
	+ \bar\rho\W\ip(t)\neg{v\ih}.
\end{align}
These \textit{east} and \textit{west} values are computed from a piecewise linear reconstruction:
\begin{align}\label{LinearApprox}
	\bar\rho\E\i(t)
	=
	\bar\rho\i(t) + \frac{\Dx}{2} \d\bar\rho\i(t)
	,\quad
	\bar\rho\W\i(t)
	=
	\bar\rho\i(t) - \frac{\Dx}{2} \d\bar\rho\i(t).
\end{align}
The discrete gradient $\d\bar\rho\i$ is defined as
\begin{align}
	\d\bar\rho\i(t)
	=
	\minmod\prt*{
		\frac{\bar\rho\i - \bar\rho\im}{\Dx},
		\frac{\bar\rho\ip - \bar\rho\im}{2\Dx},
		\frac{\bar\rho\ip - \bar\rho\i}{\Dx}
	},
\end{align}
where
\begin{align}
	\minmod(a,b,c) \coloneqq
	\begin{cases}
		\min \set{a,b,c} & \text{if }a,b,c>0, \\
		\max \set{a,b,c} & \text{if }a,b,c<0, \\
		0                & \text{otherwise}.
	\end{cases}
\end{align}
 
\FloatBarrier

\bibliographystyle{abbrv}
\bibliography{./fractional-laplacian.bib}
 
\end{document}